\documentclass[a4paper,11pt]{amsart}

\usepackage{cmbright}

\usepackage{amsmath,amsthm,amssymb,eucal, amscd, mathrsfs,amsfonts}

\usepackage{hyperref}
\usepackage{array}

\usepackage{latexsym}

  %% double spacing

\usepackage[all]{xy}

%%% diagrammes commutatifs 
 \def\commutatif{\ar@{}[rd]|{\circlearrowleft}}
 
 %%diagrammes
 %%%diagrammes

\newcommand{\eq}[1][r]
   {\ar@<-3pt>@{-}[#1]
    \ar@<-1pt>@{}[#1]|<{}="gauche"
    \ar@<+0pt>@{}[#1]|-{}="milieu"
    \ar@<+1pt>@{}[#1]|>{}="droite"
    \ar@/^2pt/@{-}"gauche";"milieu"
    \ar@/_2pt/@{-}"milieu";"droite"}
    
    \def\dar[#1]{\ar@<2pt>[#1]\ar@<-2pt>[#1]}
  \entrymodifiers={!!<0pt,0.7ex>+} %%%%%%%%%%%%%%%%%
%%%%%

%% commands
\newtheorem{thm}{Theorem}[section]
\newtheorem{pro}[thm]{Proposition}
\newtheorem{lem}[thm]{Lemma}

\newtheorem{rem}[thm]{Remark}

\theoremstyle{definition}
\newtheorem{df}[thm]{Definition}

\newtheorem{ex}[thm]{Example}
\newtheorem{exs}[thm]{Examples}
\allowdisplaybreaks
\numberwithin{equation}{section}

\newcommand{\cA}{{\mathcal A}}
\newcommand{\cB}{{\mathcal B}}
\newcommand{\cC}{{\mathcal C}}

\newcommand{\cF}{{\mathcal F}}
\newcommand{\cG}{{\mathcal G}}
\newcommand{\cH}{{\mathcal H}}

\newcommand{\cK}{{\mathcal K}}
\newcommand{\cL}{{\mathcal L}}

\newcommand{\cU}{{\mathcal U}}

% math blackboard

\newcommand{\CC}{{\mathbb C}}

\newcommand{\NN}{{\mathbb N}}

\newcommand{\PP}{{\mathbb P}}

\newcommand{\RR}{{\mathbb R}}
\newcommand{\bbS}{{\mathbb S}}
\newcommand{\uc}{{\mathbb S}^1}
\newcommand{\TT}{{\mathbb T}}

\newcommand{\ZZ}{{\mathbb Z}}

% boldface

\newcommand{\bfS}{{\mathbf S}}

\newcommand{\id}{{\mathbf 1}}

\newcommand{\Id}{{\text{id}}}

%%%

% miscellaneous

\newcommand{\wGa}{{\widetilde \Gamma}}

\newcommand{\U}{{\rm U}}
\newcommand{\wU}{{\hat{\rm U}}}
\newcommand{\wPU}{{\what{\rm PU}}}

%%%%% Some special notations:
\newcommand{\wRExt}{{\what{\operatorname{\textrm{Ext}R}}}}

\newcommand{\Rext}{{\what{\operatorname{\textrm{ext}R}}}}
\newcommand{\wRBr}{{\what{\operatorname{\textrm{Br}R}}}}

\newcommand{\wK}{\what{\cK}}
\newcommand{\wKK}{\hat{\mathbb{K}}}

\newcommand{\PicR}{{\operatorname{\textrm{Pic}R}}}
\newcommand{\wBr}{\what{{\text{\textrm{Br}}}}}

%%% Groupoids: notations
\newcommand{\Gpdo}{{\mathcal{G}^{(0)}}}
\newcommand{\grpd}{\xymatrix{\cG \dar[r]^r_s & \Gpdo}}

\newcommand{\Gamo}{{\Gamma^{(0)}}}

%%%%%
\newcommand{\what}{\widehat}

%\newcommand{\real}{\RR}

% gothic fonts

\newcommand{\fr}{{\mathfrak r}}
\newcommand{\fs}{{\mathfrak s}}

\newcommand{\RG}{{\mathfrak {RG}}}

% mathsf fonts

%% map symbols

\newcommand{\To}{\longrightarrow}

\newcommand{\mto}{\longmapsto}

%opening

%%%%Raccourcis
\newcommand{\cstar}{C^{\ast}}
\newcommand{\Ga}{\Gamma}

\newcommand{\del}{\delta}
\newcommand{\al}{\alpha}

\newcommand{\ve}{\varepsilon}
\newcommand{\vp}{\varphi}
\newcommand{\g}{\gamma}

\newcommand{\Lam}{\Lambda}
\newcommand{\Isom}{\operatorname{Isom}}

%-----------------------------------------------------------
%    Inner products
%-----------------------------------------------------------
\def\<{\langle}
\def\>{\rangle}
\let\ipscriptstyle=\scriptscriptstyle
\def\lipsqueeze{{\mskip -3.0mu}}
\def\ripsqueeze{{\mskip -3.0mu}}
\def\ipcomma{\nobreak\mathrel{,}\nobreak}
\newbox\ipstrutbox
\setbox\ipstrutbox=\hbox{\vrule height8.5pt% depth 3.5pt
width 0pt}
\def\ipstrut{\copy\ipstrutbox}
\def\lip#1<#2,#3>{\mathopen{\relax_{\ipstrut\ipscriptstyle{
#1}}\lipsqueeze
\langle} #2\ipcomma #3 \rangle}
\def\blip#1<#2,#3>{\mathopen{\relax_{\ipstrut
\ipscriptstyle{ #1}}\lipsqueeze\bigl\langle} #2\ipcomma #3 \bigr\rangle}
\def\rip#1<#2,#3>{\langle #2\ipcomma #3
\rangle_{\ripsqueeze\ipstrut\ipscriptstyle{#1}}}
\def\brip#1<#2,#3>{\bigl\langle #2\ipcomma #3
\bigr\rangle_{\ripsqueeze\ipstrut\ipscriptstyle{#1}}}
\def\angsqueeze{\mskip -6mu}
\def\smangsqueeze{\mskip -3.7mu}
\def\trip#1<#2,#3>{\langle\smangsqueeze\langle #2\ipcomma #3
\rangle\smangsqueeze\rangle_{\ripsqueeze\ipstrut\ipscriptstyle{#1}}}
\def\btrip#1<#2,#3>{\bigl\langle\angsqueeze\bigl\langle #2\ipcomma
#3
\bigr\rangle
\angsqueeze\bigr\rangle_{\ripsqueeze\ipstrut\ipscriptstyle{#1}}}
\def\tlip#1<#2,#3>{\mathopen{\relax_{\ipstrut\ipscriptstyle{
#1}}\lipsqueeze \langle\smangsqueeze\langle} #2\ipcomma #3
\rangle\smangsqueeze\rangle}
\def\btlip#1<#2,#3>{\mathopen{\relax_{\ipstrut\ipscriptstyle{
#1}}\lipsqueeze
\bigl\langle\angsqueeze\bigl\langle} #2\ipcomma #3
\bigr\rangle\angsqueeze\bigr\rangle}

\def\ip(#1|#2){(#1\mid #2)}
\def\bip(#1|#2){\bigl(#1 \mid #2\bigr)}
\def\Bip(#1|#2){\Bigl( #1 \bigm| #2 \Bigr)}
%
%===============================================

%% Groupoid maps on equivalences
\def\h[#1,#2]{[#1,#2]_{H}}

\def\ipp(#1|#2){\ip({#1}|{#2})_{\pi}}

\newcommand{\Hom}{{\operatorname{Hom}}}

%%%%
\title{Twistings of $KR$ for Real groupoids}
\author{El-ka\"{i}oum M. Moutuou}
\address{Universit\'e Paul Verlaine - Metz}
%\curraddr{}
\email{moutuou@univ-metz.fr}
%\thanks{}
%\subjclass[2010]{22A22,81T30,19L50 (Primary) 19K35 (Secondary)}
%    The 2010 edition of the Mathematics Subject Classification is
%    now available.  If you are citing a classification from the
%    new scheme, use the following input coding instead.
%\subjclass[2010]{Primary }

\date{}
%%%%%%

\begin{document}

\maketitle

\begin{abstract}

$B$-fields over a groupoid with involution are defined as Real graded Dixmier-Douady bundles. We use these to introduce the Real graded Brauer group $\wRBr_\ast(\cG)$ which constitutes the set of twistings for Atiyah's $KR$-functor in the category of locally compact groupoids with involutions. We interpret this group in terms of groupoid extensions and elements of some equivariant \v{C}ech cohomology theory. The construction of the twisted $KR$-functor is outlined. 
\end{abstract}

\section*{Introduction}

A space $X$ equipped with an involution $\tau:X\To X$ (\emph{i.e.} $\tau$ is a homeomorphism such that $\tau^2=\id$) is called a Real space. A Real vector bundle over $X$ is a complex vector bundle $E\To X$ which is itself a Real space such that the projection is equivariant, and the induced isomorphism $E_x\To E_{\tau(x)}$ is conjugate linear. The group $KR(X)$ was defined by Atiyah~\cite{At1} to be the Grothendieck group of isomorphism classes of Real vector bundles over $X$. It was argued in~\cite[\S5]{Wit} that in a spacetime manifold with a $2$-periodic symmetry $\tau$, $D$-brane charges correspond, in the case of a vanishing NS $B$-field, to Real vector bundles on the Real space $X$; while in the presence of a non-vanishing flat $B$-field, $D$-branes take values in twisted $K$-theory~\cite{BM}. It is moreover known~\cite{DFM} that a NS $B$-field corresponds to $3$-integral cohomology class $\omega$ such that $\tau^\ast\omega =-\omega$, hence an element in some appropriate \emph{cohomology theory} $HR^\ast$ for Real spaces to be defined. It is then natural to expect that in the presence of a symmetry $\tau$ and of $B$-fields, $D$-branes should take values in a twisted version of $KR$-theory. The purpose of this note is to bring this idea to the more general context of groupoids.

The paper is organised as follows. We give the notion of \emph{Real groupoids} and illustrate it by simple examples in \S1. In \S2, we introduce a kind of equivariant cohomology theory for Real groupoids and in \S3 we adapt the theory of non-abelian extensions to the Real case. In \S4, we classify involutions on graded algebras of compact operators which may be thought of as the universal elements in the study of Real graded Dixmier-Douady bundles we introduce. \S5 is devoted to the study of $B$-fields in terms of cohomology classes. We close these notes by defining \emph{twisted Real $K$-theory} of Real groupoids and giving some of its basic properties.

%%%%%

\section{Real groupoids}

Recall that a topological groupoid $\grpd$ consists of a space $\Gpdo$ (the unit space), a space $\cG$ of morphisms $s(g) \stackrel{g}{\To}r(g)$, continuous maps $s,r:\cG\To \Gpdo$ called \emph{source} and \emph{range} map, respectively, an inverse map $i:\cG\ni g\mto g^{-1}\in \cG$, and a partial product $\cG^{(2)}\ni (g,h)\mto gh\in \cG$, where $\cG^{(2)}$ is the set of composable pairs.

\begin{df}
A \emph{Real structure} on a groupoid $\grpd$ is a strict isomorphism $\tau: \cG \To \cG$ such that $\tau^2=\Id$. We say that $(\cG,\tau)$ is a Real groupoid. If $\cG$ is a Lie groupoid, $\tau$ is required to be a diffeomorphism.\\
We will often write $\bar{g}$ for $\tau(g)$, and we leave out $\tau$.  
Suppose $\cG$ and $\Ga$ are Real groupoids. A morphism $\phi:\Ga \To \cG$ is \emph{Real} if $\phi(\bar{\g})=\overline{\phi(\g)}, \forall \g\in \Ga$; in this case we say that $\phi$ is a morphism of Real groupoids. By $\cG_\RR$ we denote the subgroupoid of invariant elements of $\cG$.  
\end{df}

Observe that if the unit space is the point, then $\cG$ is a topological group $G$, and the unit element is a fixed point of the involution.

\begin{ex}
Any topological Real space $X$ in the sense of Atiyah can be thought of as a Real groupoid with unit space and space of morphisms identified with $X$; \emph{i.e.}, the operations are $s(x)=r(x)=x, \ x.x=x$, $x^{-1}=x$.
\end{ex}

\begin{ex}
Following the example above, the projective space $\PP\CC^n$ is a Real groupoid with respect to the coordinatewise complex conjugation; $(\PP\CC^n)_\RR$ is easily identified with the real projective space $\PP\RR^n$.	
\end{ex}

\begin{ex}
Let $X$ be a topological Real space. Let us consider the fundamental groupoid $\pi_1(X)$ over $X$ whose arrows from $x\in X$ to $y\in X$ are homotopy classes of paths (relative to endpoints) from $x$ to $y$ and the partial multiplication given by the concatenation of paths with fixed endpoints. The involution $\tau$ induces a Real structure on the groupoid as follows: if $[\g] \in \pi_1(X)$, we set $\tau([\g])$ the homotopy classes of the path $\tau(\g)$ defined by $\tau(\g)(t):=\tau(\g(t))$ for $t\in [0,1]$. 
\end{ex}
\medskip 

\begin{ex}[Orientifold groupoid]~\label{ex:orientifold}
Consider an orbifold $X\rtimes G$, where $G$ is a compact group with involution, acting equivariantly (\emph{i.e.} $\overline{x\cdot g}=\bar{x}\cdot \bar{g}$) and freely on the compact Real space $X$. Then $X\rtimes G$ is a Real groupoid with unit space $X$, source and range maps $s(g,x)=xg, r(x,g)=x$, partial product $(x,g)\cdot(xg,h)=(x,gh)$, and Real structure $(x,g)\mto (\bar{x},\bar{g})$.				
\end{ex}

Two Real structures $\tau$ and $\tau'$ on $\cG$ are said to be \emph{conjugate}, and we write $\tau\sim \tau'$, if there exists a groupoid automorphism $\phi: \cG \To \cG$ such that $\tau' = \phi \circ \tau \circ \phi^{-1}$.

\begin{ex}
Let $n\in \NN^\ast$. Suppose $\tau$ is a Real structure on the additive group $\RR^n$. Then, every $u\in \RR^n$ decomposes into a unique sum $v+w$ such that $\tau(v)=v$ and $\tau(w)=-w$. Indeed, $u=\frac{u+\tau(u)}{2}+\frac{u-\tau(u)}{2}$, so that $\RR^n=\ker (\frac{\id-\tau}{2})\oplus \operatorname{Im}(\frac{\id-\tau}{2})$, and with respect to this decomposition, $\tau$ is given by $\tau(v,w)=(v,-w)$. It then follows that there exists a unique decomposition $\RR^n=\RR^p\oplus \RR^q$ such that $\tau$ is determined by the formula \[\tau(x,y)=(\id_p\oplus(-\id_q))(x,y):=(x,-y),\]
for all $(x,y)=(x_1,\cdots, x_p,y_1,\cdots,y_q)\in \RR^p\oplus \RR^q$. \\
For each pair $(p,q)\in \NN$, we will write $\RR^{p,q}$ for the additive group $\RR^{p+q}$ equipped with the Real structure $(\id_p\oplus(-\id_q))$. 

Now, we define the Real space $\bfS^{p,q}$ as the stable subset (\emph{i.e.} invariant under the Real structure) of $\RR^{p,q}$ consisting of those $u\in \RR^{p+q}$ such that $\|u\|=1$. For $q=p$, $\bfS^{p,p}$ is clearly identified with the Real space $\bbS^p$ whose Real structure is given by the coordinatewise complex conjugation. Notice that $\bfS^{p,0}$ is the fixed point subset of $\bfS^{p,q}$.
\end{ex}

\begin{df}
Let $Z$ be a locally compact Hausdorff Real space, with Real structure $\tau: Z\ni z\mto\bar{z}:=\tau(z)\in Z$. A right Real action of $\cG$ on $Z$ consists of an action of $\cG$ on $Z$ which is compatible with the Real structures; \emph{i.e.} there is a continuous open map $\fs:Z\To \Gpdo$, and a continuous map $Z\ast\cG:=Z\times_{\fs,\Gpdo,r}\cG\To Z$, denoted by $(z,g)\mto z\cdot g$ (or just $zg$) such that
\begin{itemize}
\item[(a)] $\overline{z\cdot g}=\bar{z}\cdot\bar{g}$ for all $(z,g)\in Z \ast \cG$;
\item[(b)] $\overline{\fs(z)}=\fs(\bar{z}) $ for all $z\in Z$;
\item[(c)] $\fs(zg)=s(g)$;
\item[(d)] $z(gh)=(zg)h$ for $(z,g)\in Z \ast \cG$ and $(g,h)\in \cG^{(2)}$;
\item[(e)] $z\fs(z)=z$ for all $z\in Z$, where we identify $\fs(z)$ with its image in $\cG$ by the inclusion $\Gpdo \hookrightarrow \cG$. 	
\end{itemize}
We say that $Z$ is a (right) Real $\cG$-space. The Real action is \emph{free} (resp. \emph{proper}) if the Real map $Z\ast\cG \ni (z,g)\mto (z,zg)\in Z\times Z$ is one-to-one (resp. proper).
\end{df}

\begin{ex}
Let $(p,q)\in \NN$, and let us denote by $\ZZ^{p,q}$ the group $\ZZ^{p+q}=\ZZ^p\oplus \ZZ^q$ equipped with the Real structure inherited from that of $\RR^{p,q}$. Then, it is straightforward to see that the usual action of $\ZZ^{p+q}$ on $\RR^{p+q}$ by translation is compatible with the Real structures, so that $\RR^{p,q}$ is a Real $\ZZ^{p,q}$-space. 	
\end{ex}

\begin{df}
Let $\cG$ be a Real groupoid and $Y$ a Real space. A \emph{Real $\cG$-bundle} over $Y$ is a Real $\cG$-space with respect to a map $\fs:Z\To \Gpdo$, together with a Real open surjective map $\pi:Z\To Y$ such that $\pi(zg)=\pi(z)$ for all $(z,g)\in Z\ast \cG$. Such a $\cG$-bundle is \emph{principal} if 
\begin{itemize}
\item[(a)] $\pi$ is locally split (\emph{i.e.} it admits local sections), and 
\item[(b)] if the Real map $Z\ast\cG\ni (z,g)\mto (z,zg)\in Z\times_YZ$ is a homeomorphism.	
\end{itemize}
Finally, we say that $\pi:Z\To Y$ is \emph{locally trivial} if in addition
\begin{itemize}
 \item[(c)] for each $y\in Y$, there exist an open neighborhood $U$ of $y$ in $Y$, a continuous map $\vp_{U}:U\To \Gpdo$, and a homeomorphism $$\pi^{-1}(U)\To U\times_{\vp_U,\Gpdo,r}\cG,$$ such that  if we denote by $\tau_Z$ and $\tau_Y$ the Real structures of $Z$ and $Y$, respectively, then the following diagram commutes
 \begin{equation*}
 	\xymatrix{\pi^{-1}(U) \ar[d]^{\tau_Z} \ar[rr]^{\cong} && U\times_{\vp_U,\Gpdo,r}\cG \ar[d]^{\tau_Y\times \rho} \\
 	\pi^{-1}(\tau_Y(U)) \ar[rr]^{\cong} && \tau_Y(U)\times_{\vp_{_{\tau_Y(U)}},\Gpdo,r}\cG}	
 	\end{equation*}	
 \end{itemize} 	
\end{df}

\begin{ex}
The surjective Real map $\RR^{0,1}\To \bfS^{1,1}$, given by $(0,t)\mto (\cos 2\pi t,\sin 2\pi t)$, turns $\RR^{0,1}$ into a Real principal $\ZZ^{0,1}$-bundle over $\bfS^{1,1}$. Indeed, it is easy to check that the kernel of this map is $\ZZ^{0,1}$, so that $\bfS^{1,1}$ is isomorphiic (as a Real group) to the Real group $\TT^{0,1}:=\RR^{0,1}/\ZZ^{0,1}$. \\
\end{ex}

\begin{df}
A \emph{generalized Real morphism} $\xymatrix{\Ga \ar[r]^Z &\cG}$ from a Real groupoid $\Ga$ to a Real groupoid $\cG$ consists of a Real space $Z$, and two Real continuous open maps $$\xymatrix{ \Gamo & Z \ar[l]_\fr \ar[r]^\fs & \Gpdo},$$ giving to $Z$ the structures of a left Real $\Ga$-space (with respect to $\fr$) and a right Real $\cG$-space (with respect to $\fs$), such that 
\begin{itemize}
\item[(a)] the two actions commute: if $(z,g)\in Z \times_{\fs, \Gpdo,r}\cG$ and $(\g,z)\in \Ga\times_{s,\Gamo,\fr} Z$ we must have $\fs(\g z)= \fs(z)$, $\fr(zg)=\fr(z)$ so that $\g (zg)=(\g z)g$;
\item[(b)] $\fr: Z\To \Gamo$ is a locally trivial Real principal  $\cG$-bundle.		
	\end{itemize}
		
Such a $Z$ is said to be a \emph{Morita equivalence} if in addition 
\begin{itemize}
	\item[(c)] $\fs:Z\To \Gpdo$ is a locally trivial Real principal $\Ga$-bundle. In that case, we say that the Real groupoids $\Ga$ and $\cG$ are \emph{Morita equivalent}, and we write $\Ga\sim_Z \cG$.	
	\end{itemize}	
\end{df}

\begin{ex}
Let $f: \Ga \To \cG$ be a strict Real morphism. Let us consider the fibred product $Z_f:= \Gamo \times_{f,\Gpdo, r} \cG$ and the maps $\fr: Z_f \To \Gamo, \ (y,g)\mto y$ and $\fs: Z_f \To \Gpdo, \ (y,g)\mto s(g)$. For $(\g, (y,g))\in \Ga \times_{s,\Gamo, \fr} Z_f)$, we set $\g\cdot(y,g):=(r(\g),f(\g)g)$ and for $((y,g),g')\in Z_f\times_{\fs,\Gpdo,r} \cG$ we set $(y,g).g':= (y,gg')$. Using the definition of a strict morphism, it is easy to check that these maps are well defined and make $Z_f$ into a generalized morphism from $\Ga$ to $\cG$. Furthermore, these maps are compatible with the Real structure $\tau$ on $Z_f$ defined by $\tau(y,g):=(\bar{y},\bar{g})$; so that $Z_f$ is generalized Real morphism.
\end{ex}

Given a Morita equivalence $\Ga\sim_Z \cG$ of Real groupoids, its inverse, denoted by $(Z^{-1},\tau)$, is $(Z,\tau)$ as Real space, and if $\flat: Z\To Z^{-1}$ is the identity map, the left Real $\cG$-action on $(Z^{-1},\tau)$ is given by $g\cdot\flat(z):=\flat(z\cdot g^{-1})$, and the right Real $\Ga$-action is given by $\flat(z)\cdot\g:=\flat(\g^{-1}\cdot z)$; $Z^{-1}$ is then a generalized Real morphism from $\cG$ to $\Ga$. 	

%%%%%%
\section{Real \v{C}ech cohomology}

In this section we recall the definition of \emph{Real \v{C}ech cohomology} defined in the author's thesis~\cite{Mou}. First recall from~\cite{Tu1} that the cohomology groups $\check{H}^\ast (\cG_\bullet,\cF^\bullet)$ are defined as the \v{C}ech cohomology groups of the simplicial spaces $\cG_\bullet$ with $n$-simplex $$\cG_n:=\cG^{(n)}:=\{\overrightarrow{g}=(g_1,\cdots, g_n)\in \cG^n\mid s(g_1)=r(g_2), \cdots, s(g_{n-1})= r(g_n)\},$$ and where $\cF^\bullet$ is an (pre-)simplicial (pre-)sheaf over $\cG_\bullet$ (cf. ~\cite{Tu1}). The $n$-cochains on $\cG$ with respect to a pre-simplicial open cover  $\cU_\bullet=(\{U^n_{\lambda}\}_{\lambda\in \Lam_n})_{n\in \NN}$ of $\cG_\bullet$ are the elements of the group 
\[C^n(\cU_\bullet,\cF^\bullet):=\prod_{\lambda \in \Lam_n}\cF^n(\cU_\lambda^n).\]
Moreover, the differential $d^n:C^n(\cU_\bullet,\cF^\bullet)\To C^{n+1}(\cU_\bullet,\cF^\bullet)$ is defined by the formula $$(d^nc)_{\lambda\in \Lam_{n+1}}:=\sum_{k=0}^{n+1}(-1)^k \tilde{\ve}_k^\ast(c_{\tilde{\ve}_k(\lambda)}),$$ where $\ve_k:[n]\To [n+1]$ is the unique increasing injective map that avoids $k$.

Now, let $\cG$ be a Real groupoid. Then $\cG_n$ is a Real space with respect to the involution $\tau_n(\overrightarrow{g}):=(\bar{g}_1,\cdots,\bar{g}_n)$. Suppose $\cF^\bullet$ is the abelian sheaf such that for all $\lambda\in \Lam_n$, $\cF^n(U_\lambda^n)=\cC(U^n_\lambda,S)$, where $S$ is an abelian group equipped with a Real structure $s\mto \bar{s}$. Then, $\cF^\bullet$ is a Real sheaf over $\cG_\bullet$ in the sense that for all $n\in \NN$, and for all pre-simplicial Real open cover $\cU_\bullet$ of $\cG_\bullet$, we have a commutative diagram 
\[\xymatrix{\cF^n(U^n_\lambda) \ar[d]^{\sigma_{U^n_\lambda}} \ar[r] & \cF^n(V^n_\lambda) \ar[d]^{\sigma_{V^n_\lambda}} \\
\cF^n(U^n_{\bar{\lambda}}) \ar[r] & \cF^n(V^n_{\bar{\lambda}})
} \]
whenever $V^n_\lambda\subset U^n_\lambda$, where the horizontal arrows are the restriction maps, and the vertical ones are defined by $\cC(U^n_\lambda,S)\ni f \mto \bar{f}\in \cC(U^n_{\bar{\lambda}},M)$, with $\bar{f}(\tau_n(\overrightarrow{g})):=\overline{f(\overrightarrow{g})}$ for $\overrightarrow{g}\in U^n_\lambda$.

\begin{df}
Let $\cG$ and $\cF$ be as above. Given a pre-simplicial Real open cover $\cU_\bullet$ of $\cG_\bullet$, we will write for simplicity $C^n(\cU_\bullet,S):=C^n(\cU_\bullet,\cF^\bullet)$. Set
\[CR^n(\cU_\bullet,S):=\{(c_\lambda)_{\lambda\in \Lam_n}\in C(\cU_\bullet,S) \mid c_{\bar{\lambda}}=\overline{c_\lambda}, \forall \lambda\in \Lam_n\}.\]	
Then, an element in $CR^n(\cU_\bullet,S)$ is called a \emph{Real $n$-cochain with coefficient in $S$} with respect to $\cU_\bullet$. Then the complex of abelian groups $CR^\ast(\cU_\bullet,S)$ is closed under the differential $d: C^\ast(\cU_\bullet,S)\To C^{\ast+1}(\cU_\bullet,S)$. We define the $n^{\text{th}}$ \emph{Real cohomology group} of $\cG$ with coefficients in $S$ relative to $\cU_\bullet$ as 
\[HR^n(\cU_\bullet,S):= \frac{ZR^n(\cU_\bullet,S)}{BR^n(\cU_\bullet,S)}:=\frac{\ker d^n}{\operatorname{Im} d^{n-1}}.\] 
Finally, we define the \emph{Real \v{C}ech cohomology groups} of $\cG$ with coefficients in the Real group $S$ by
\[\check{H}R^\ast(\cG_\bullet,S):=\varinjlim{HR^\ast(\cU_\bullet,S)},\] where the injective limit is taken over the collection of all pre-simplicial Real open covers of $\cG_\bullet$ (see ~\cite{Tu1}).
\end{df}

\begin{exs}~\label{exs:HR0-HR1}
1) \emph{(The group $\check{H}R^0$).} From the construction, $\check{H}R^0(\cG_\bullet,S)\cong\Ga_{\text{inv}}(\Gpdo,S)_\RR$, where the latter is the group consisting of continuous functions $f:\Gpdo\To S$ such that $f(\bar{x})=\overline{f(x)}, \forall x\in \Gpdo$, and which are $\cG$-invariant; i.e. $f(s(g))=f(r(g)), \forall g\in \cG$. For instance, if $\cG_\RR=\cG$, then $\check{H}R^0(\cG_\bullet,S)\cong \Ga_{\text{inv}}(\Gpdo,S_\RR)$.\\

2) \emph{(The group $\check{H}R^1(\cG_\bullet,S)$).} It is easily checked that $\check{H}R^1(\cG_\bullet,S)$ is isomorphic to the abelian group $\Hom_{\RG}(G,S)$ of isomorphism classes of generalized Real morphisms from $\cG$ to $S$. In other words, $\check{H}R^1(\cG_\bullet,S)$ classifies the isomorphism classes of Real $\cG$-equivariant principal $S$-bundles over the Real space $\Gpdo$. For instance, with respect to the trivial Real structure on $\cG$, we have $\check{H}R^1(\cG_\bullet,S)\cong \check{H}^1(\cG_\bullet,S_\RR)$.\\

3) \emph{(The Real Picard group).} A particular case of the above is when $S=\uc$ equipped with the complex conjugation. In this case, $\check{H}R^1(\cG_\bullet,\uc)$ is isomorphic to the \emph{Real Picard group} $\PicR(\cG)$ of $\cG$ defined as the set of isomorphism classes of $\cG$-equivariant Real line bundles over $\Gpdo$ equipped with $\cG$-invariant hermitian metrics compatible with the Real structures. In particular, if $\tau$ is trivial, then $\PicR(\cG)\cong \check{H}^1(\cG_\bullet,\ZZ_2)$ (since $\uc_\RR\cong \ZZ_2$). For example, for $\cG=\xymatrix{G\dar[r] & \cdot }$ with the trivial Real structure, we have $\PicR(\cG)\cong \Hom(G,\ZZ_2)$.\\

4) \emph{(Field strenght of Neveu-Schwarz $B$-fields).} Let $M$ be a smooth compact $10$-manifold with involution $\tau:M\To M$. Then $\check{H}R^n(M_\bullet,\ZZ^{0,1})$ consists of all $n$-forms $\omega \in \Omega^n(M)$ such that $\tau^\ast \omega=-\omega$. for instance, in a superstring theory background, an element in $\check{H}R^3(M_\bullet,\ZZ^{0,1})$ may be thought of as the field strenght of a NS $B$-field (cf.~\cite{DFM}); this justifies the terminology used in Section~\ref{B-fields}. 
\end{exs}

%%%%%
\section{Real graded central extensions}

\begin{df}
A Real graded $\uc$-groupoid over $\cG$ consists of a Real groupoid $\tilde{\cG}$ with unit space $\Gpdo$, a Real $\uc$-principal bundle $\pi:\tilde{\cG}\To\cG$, and a Real morphism $\del:\cG\To \ZZ_2$, where $\ZZ_2$ is given the trivial involution.

Morita equivalence and tensor product of such are defined as in the usual case (cf. Freed-Hopkins-Teleman~\cite{FHT}, and Tu~\cite{Tu}), plus the compatibility with the involutions involved. The abelian group thus obtained is denoted by $\Rext(\cG,\uc)$.
A Real graded $\uc$-central extension of $\cG$ is a Real graded $\uc$-groupoid $(\wGa,\del)$ over a Real groupoid $\Ga$ Morita equivalent to $\cG$. Such an extension will be written as a triple $(\wGa,\Ga,\del)$. The set of Morita equivalent classes of Real graded $\uc$-central extension of $\cG$ is an abelian group $\wRExt(\cG,\uc)$ whose zero element is the class of the trivial extension $\uc\To \cG\times\uc \To \cG$ with trivial grading.  	
\end{df}

\begin{ex}~\label{ex:ext-PU}
Let $\hat{\cH}:=l^2(\NN)\oplus l^2(\NN)$ be the graded infinite-dimensional Hilbert space whose grading is given by the unitary operator $u_0:=\begin{pmatrix}0&\id\\ \id&0\end{pmatrix}$. Put $J_\RR=\begin{pmatrix}0&bar \\ bar &0\end{pmatrix}$, where $bar:l^2(\NN)\To l^2(\NN)$ is the complex conjugation with respect to the canonical basis of $l^2(\NN)$. Then $J_\RR$ is a Real structure commuting with the grading. For $i =0,1 \mod 2$, let $\U^i(\hat{\cH})$ denote the set of unitaries of degree $i$. Next, let the group $\wU(\hat{\cH}):=\U^0(\hat{\cH})\sqcup\U^1(\hat{\cH})$ be equipped with the Real structure given by $Ad_{J_\RR}$. Notice that scalar multiplication by elements of $\uc$ defines a Real $\uc$-action on $\wU(\hat{\cH})$ and that $Ad_{J_\RR}$ passes through the quotient $\wPU(\hat{\cH}):=\wU(\hat{\cH})/\uc$. Therefore, we have the Real graded $\uc$-central extension $(\wU(\hat{\cH}),\wPU(\hat{\cH}),\partial)$ of the Real groupoid $\xymatrix{\wPU(\hat{\cH})\dar[r]& \cdot}$, where $\partial([u])\in\{0,1\}$ is the degree of $u$.	
\end{ex}

\medskip 

\begin{thm}~\label{thm:extensions}
$\wRExt(\cG,\uc)\cong \check{H}R^1(\cG_\bullet,\ZZ_2)\ltimes \check{H}R^2(\cG_\bullet,\uc)$, where the action is induced from the Real inclusion $\ZZ_2=\{0,1\}\hookrightarrow \uc$ obtained by identifying $1$ with $1+i0\in \uc$. Furthermore, if $\cG$ is proper, then $$\wRExt(\cG,\uc)\cong \check{H}R^1(\cG_\bullet,\ZZ_2)\ltimes \check{H}R^3(\cG_\bullet,\ZZ^{0,1}).$$	
\end{thm}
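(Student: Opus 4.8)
The proof splits into two independent parts: the general isomorphism $\wRExt(\cG,\uc)\cong \check{H}R^1(\cG_\bullet,\ZZ_2)\ltimes \check{H}R^2(\cG_\bullet,\uc)$, and the refinement for proper $\cG$ replacing $\check{H}R^2(\cG_\bullet,\uc)$ by $\check{H}R^3(\cG_\bullet,\ZZ^{0,1})$.

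For the first part, the strategy is to mimic the classical description of graded $\uc$-central extensions (as in Tu~\cite{Tu} and Freed--Hopkins--Teleman~\cite{FHT}), tracking the Real structure throughout. First I would reduce, via the definition of $\wRExt$, to working with a fixed Real groupoid $\Ga$ Morita equivalent to $\cG$ together with a pre-simplicial Real open cover $\cU_\bullet$ of $\Ga_\bullet$; since Real cohomology is defined as an injective limit over such covers, and Morita equivalence preserves both sides (this requires checking that a Real Morita equivalence induces an isomorphism on $\check{H}R^\ast$, which follows by the usual argument applied to the simplicial Real spaces), one may assume $\cG=\Ga$ and the extension is given by local data. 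The grading part is a Real morphism $\del:\cG\To\ZZ_2$, i.e. a Real $1$-cocycle with values in $\ZZ_2$ (with trivial involution), so its class lives in $\check{H}R^1(\cG_\bullet,\ZZ_2)$; see Example~\ref{exs:HR0-HR1}(2). Given a trivialization of the underlying Real $\uc$-principal bundle over the cover, the groupoid multiplication on $\wGa$ produces a family of transition functions $c_{\lambda}\in\cC(U^2_\lambda,\uc)$ satisfying a \emph{twisted} cocycle identity, where the twist by $\del$ accounts for the graded commutation sign; the compatibility of the bundle, the multiplication, and $\del$ with the involutions forces $c_{\bar\lambda}=\overline{c_\lambda}$, so $c$ is a Real $2$-cochain. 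Changing the trivialization changes $c$ by a Real coboundary, and a standard diagram chase shows that the pair $(\del,c)$ is well defined up to the semidirect product relation: the action of $\check{H}R^1(\cG_\bullet,\ZZ_2)$ on $\check{H}R^2(\cG_\bullet,\uc)$ is precisely the one coming from the sign twist, i.e. induced by the Real inclusion $\ZZ_2\hookrightarrow\uc$. Conversely, from a pair of Real cocycles one reconstructs a Real graded $\uc$-central extension by the usual clutching construction; checking that the Real structure glues correctly is routine. This gives mutually inverse group homomorphisms, establishing the first isomorphism.

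For the second part, assume $\cG$ proper. The point is that for proper $\cG$ the comparison of $\check{H}R$-groups coming from the short exact sequence of Real sheaves
\begin{equation*}
0\To\ZZ^{0,1}\To\underline{\RR}^{0,1}\To\uc\To 0,
\end{equation*}
where $\underline{\RR}^{0,1}$ is the sheaf of continuous functions with values in $\RR^{0,1}=\RR$ with involution $t\mapsto -t$ and the surjection is $t\mapsto e^{2\pi i t}$ (a Real map onto $\uc$ with its conjugation, as in the Example preceding Definition~\ref{df}), yields a long exact sequence in Real \v{C}ech cohomology. Properness of $\cG$ guarantees the existence of Real partitions of unity subordinate to Real open covers of $\cG_\bullet$ (averaging an ordinary partition of unity over the involution, using a Haar system), hence $\check{H}R^n(\cG_\bullet,\underline{\RR}^{0,1})=0$ for $n\ge 1$ — the sheaf $\underline{\RR}^{0,1}$ is fine in the Real sense. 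The connecting homomorphism then gives isomorphisms $\check{H}R^{n}(\cG_\bullet,\uc)\cong\check{H}R^{n+1}(\cG_\bullet,\ZZ^{0,1})$ for $n\ge 1$; taking $n=2$ and substituting into the first part yields the claim.

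\textbf{Main obstacle.} The routine-looking but genuinely delicate step is the Morita invariance of $\check{H}R^\ast$ together with the compatibility of all the involutions in the clutching/reconstruction step: one must verify that a Real Morita equivalence $\Ga\sim_Z\cG$ really does induce an isomorphism $\check{H}R^\ast(\Ga_\bullet,S)\cong\check{H}R^\ast(\cG_\bullet,S)$ — this needs a Real refinement of the usual pull-back/push-forward argument on simplicial covers and care that the limit over \emph{Real} covers is cofinal — and, in the reconstruction, that the candidate involution on the clutched groupoid $\wGa$ is well defined on overlaps precisely because $c$ and $\del$ are \emph{Real} cochains (not merely ordinary ones). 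The other subtle point is checking that the semidirect-product structure, rather than a direct sum, is forced: this amounts to identifying the failure of $(\del,c)\mapsto$ class to be additive with exactly the sign-twist action, which is where the graded (as opposed to ungraded) nature of the extension enters. Everything else — the exactness of the sheaf sequence, fineness of $\underline{\RR}^{0,1}$ under properness, and the explicit form of the connecting map — follows standard patterns once the Real bookkeeping is set up.
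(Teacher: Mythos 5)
Your proposal is correct and follows essentially the same route as the paper, which itself simply invokes the argument of Freed--Hopkins--Teleman (Corollary 2.25) adapted to the Real setting: decompose the extension into the grading class in $\check{H}R^1(\cG_\bullet,\ZZ_2)$ and a Real $\uc$-valued $2$-cocycle from local trivializations, then for proper $\cG$ use the exponential sequence $0\To\ZZ^{0,1}\To\underline{\RR}^{0,1}\To\uc\To 0$ and the vanishing of the higher cohomology of the fine middle sheaf. Your expansion of the Real bookkeeping (the condition $c_{\bar\lambda}=\overline{c_\lambda}$, the sign-twist giving the semidirect product, and the Morita-invariance caveat) is exactly what the paper leaves implicit.
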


The proof is almost the same as that of~\cite[Corollary 2.25]{FHT}.

\begin{rem}
Note that $\cG$ is a space with the trivial involution, then $\wRExt(X,\uc)$ is actually the graded analog of the group of stable isomorphism classes of \emph{real bundle gerbes} defined by Mathai-Murray-Stevenson in~\cite{MMS}.	
\end{rem}

%%%%%%%

\section{$B$-fields over Real groupoids}~\label{B-fields}

Let $\wK_{ev}=\cK(\hat{\cH})$ be the $\ZZ_2$-graded $\cstar$-algebra of compact operators, with grading given by $Ad_{u_0}$ (cf. Example~\ref{ex:ext-PU}). Let $\wK_{od}:=\cK(l^2(\NN))\oplus \cK(l^2(\NN))$ with the odd grading $(T_1,T_2)\mto (T_2,T_1)$. A $\ZZ_2$-graded $\cstar$-algebra $A$ is said elementary of parity $0$ (resp. $1$) if it is isomorphic to $\wK_{ev}$ (resp. $\wK_{od}$).

Recall that a \emph{Real $\cstar$-algebra} is a $\cstar$-algebra $A$ endowed with a \emph{Real structure}; \emph{i.e.}, an involutory conjugate linear automorphism $\sigma:A\To A$. In this section we are concerned, among other things, with Real structures on $\wK_{ev}$ and $\wK_{od}$. We classify these as follows. First observe that a Real structure on $\wK_{ev}$ is neccessarily in the form $Ad_J$, for a conjugate linear homogeneous unitary $J:\hat{\cH}\To \hat{\cH}$ such that $J^2=\pm 1$. Say a Real graded $\cstar$-algebra $A$ is of type $[0;\ve,\eta]$, if it is isomorphic (in an equivariant way) with $\wK_{ev}$ equipped with a Real structure $Ad_J$ such that $\partial(J)=\ve\in \{0,1\}$ and $J^2=\eta1, \eta\in \{\pm\}$. Similarly, a Real structure on $\wK_{od}$ is either in the form $\sigma^0=\begin{pmatrix}Ad_J & 0\\ 0& Ad_J\end{pmatrix}$ or in the form $\sigma^1=\begin{pmatrix}0&Ad_J\\ Ad_J & 0\end{pmatrix}$, where $J:l^2(\NN)\To l^2(\NN)$ is a conjugate linear unitary such that $J^2=\pm1$. We say that $A$ is of type $[1;i,\ve]$ if it is isomorphic to $\wK_{od}$ equipped with a Real structure of the form $\sigma^i, i=0,1$ with $J^2=\ve1, \ve\in \{\pm\}$. We summarize these by a table:

\begin{table}[!h]
\centering

\begin{tabular}{|cc|} 
 \hline Parity $0$  & Parity $1$  \\ \hline
  $\what{\cK}_0:=[0;0,+]$    & $\what{\cK}_1:=[1;0,+]$  \\ 
  $\what{\cK}_2:=[0;1,+]$    & $\what{\cK}_3:=[1;1,-]$  \\ 
  $\what{\cK}_4:=[0;0,-]$    & $\what{\cK}_5:=[1;0,-]$   \\ 
  $\what{\cK}_6:=[0;1,-]$    & $\what{\cK}_7:=[1;1,+]$   \\ \hline

\end{tabular}

\end{table} 	

Note that types do add up under graded tensor product: $\wK_p\hat{\otimes}\wK_q=\wK_{p+q}$.

\begin{df}
Let $\grpd$ be a Real groupoid. A \emph{Real graded Dixmier-Douady bundle} over $\cG$ consists of a Real space $\cA$, an equivariant continuous surjective map $p:\cA\To \Gpdo$ defining a continuous bundle of graded $\cstar$-algebras such that the induced map $\tau_x:\cA_x\To \cA_{\bar{x}}$ is a conjugate linear graded isomorphism, a family $\al$ of isomorphisms of graded $\cstar$-algebras $\al_g:\cA_{s(g)}\To \cA_{r(g)}$ such that $\al_{gh}=\al_g\circ \al_h$ whenever the product makes sense and  such that $\al_{\bar{g}}(\bar{a})=\overline{\al_g(a)} \forall a\in \cA_{s(g)}$, a family of $\ZZ_2$-graded isomorphisms $h_x: \cA_x\To \wKK$ (where $\wKK$ is either $\wK_{ev}$ or $\wK_{od}$ and is the same for every $x$), and a Real structure $\sigma$ on $\wKK$ such that the following commutes 
\begin{equation}
	\xymatrix{\cA_x\ar[r]^{h_x} \ar[d]^{\tau_x} & \wKK \ar[d]^\sigma \\ \cA_{\bar{x}} \ar[r]^{h_{\bar{x}}} & \wKK}	
	\end{equation}
over all $x\in \Gpdo$.
The family $\al$ is called the Real $\cG$-action. Such $\cA$ is said of \emph{type $p \mod 8$} if $\wKK$ is a Real graded elementary $\cstar$-algebra of type $\wK_p$ with respect to $\sigma$. The triple $(\cA,\al,p)$ will be called a \emph{$B$-field} of type $p$ over the Real groupoid $\cG$; for the sake of simplicity,  the $B$-field will be represented by the bundle $\cA$.		
\end{df}

There is a canonical $B$-field $\wK_\cG$ of type $0$ constructed as follows. Let $\mu=\{\mu^x\}_{x\in \Gpdo}$ be a Real Haar system (\emph{i.e.} $\tau^\ast\mu =\mu$); this is easily seen to always exist. For each $x\in \Gpdo$, consider the graded Hilbert space $\hat{\cH}_{x,\cG}:=L^2(\cG^x;\hat{\cH})$ with scalar product
\begin{equation*}
	\tlip<\xi,\eta>(x):= \int_{\cG^x}\<\xi(g),\eta(g)\>_\CC d\mu_\cG^x(g), \ {\rm for\ } \xi,\eta\in L^2(\cG^x;\hat{\cH})\cong L^2(\cG^x)\hat{\otimes}\hat{\cH}.
\end{equation*} 
Let $\hat{\cH}_{\cG}:=\coprod_{x\in X}\hat{\cH}_x$ be equipped with the action $$g\cdot(s(g),\vp\hat{\otimes}\xi):=(r(g),(\vp\circ g^{-1})\hat{\otimes}\xi)\in \what{\cH}_{r(g)}.$$ Define the Real structure on $\what{\cH}_{\cG}$ by $$(x,\vp \hat{\otimes}\xi)\mto (\bar{x},\tau(\vp)\hat{\otimes}J_{\RR,0}(\xi)),$$ where for $\vp\in L^2(\cG^x)$ and $g\in \cG^x$, $\tau(\vp)(g):=\overline{\vp(\bar{g})}$. Then one shows that there exists a unique topology on $\hat{\cH}_\cG$ such that the canonical projection $\what{\cH}_{\cG}\To \Gpdo$ defines a locally trivial Real graded Hilbert $\cG$-bundle.

Now, let $\what{\cK}_{x}:=\cK(\what{\cH}_{x})$ be equipped with the operator norm topology, and put $$\what{\cK}_{\cG}:=\coprod_{x\in X}\what{\cK}_{x}$$ together with the Real structure given by $\overline{(x,T)}:=(\bar{x},\bar{T})$, where $\bar{T}\in \what{\cK}_{\bar{x}}$ is defined by $\bar{T}(\vp\hat{\otimes}\xi):=T(\tau(\vp)\hat{\otimes}Ad_{J_\RR}(\xi))$ for any $\vp\hat{\otimes}\xi \in \what{\cH}_{\bar{x}}$. Next, define the Real $\cG$-action $\theta$ on $\what{\cK}_{\cG}$ by 
\[ \theta_g(s(g),T):=(r(g),gTg^{-1}). 
\]
Then the canonical projection $\what{\cK}_{\cG}  \To X, \ (x,T)\mto x$ makes $\wK_\cG$ into a $B$-field of type $0$ over $\cG$.

Two $B$-fields $\cA$ and $\cB$ are said \emph{Morita equivalent} if they are of the same type and if $\cA\hat{\otimes}\wK_\cG\cong \cB\hat{\otimes}\wK_\cG$. Denote by $\wRBr_p(\cG)$ the set of Morita equivalence classes of $B$-fields of type $p \mod 8$, and define the \emph{Real graded Brauer group} as $\wRBr_\ast(\cG):=\bigoplus_{p=0}^7 \wRBr_p(\cG)$.

\begin{pro}
$\wRBr_\ast(\cG)$ is an abelian group under the operations of graded tensor product, and inversion given by considering conjugate bundles. The identity element of $\wRBr_\ast(\cG)$ is given by the class of the trivial bundle $\Gpdo\times \CC\To \Gpdo$, with Real $\cG$-action  $\al_g (s(g),\lambda)):= (r(g),\lambda)$ and Real structure $(x,\lambda)\mto (\bar{x},\bar{\lambda})$.	
\end{pro}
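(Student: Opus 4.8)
The plan is to verify the abelian group axioms for $\wRBr_\ast(\cG)$ by checking each in turn, treating the graded tensor product $\hat\otimes$ of $B$-fields as the binary operation. First I would record the basic algebraic facts needed: the graded tensor product of two Real graded Dixmier--Douady bundles is again such a bundle (the fibrewise graded tensor product of bundles of graded $\cstar$-algebras, with the $\cG$-action $\al_g\hat\otimes\al'_g$, the Real structure $\tau_x\hat\otimes\tau'_x$, and the local trivialisations $h_x\hat\otimes h'_x$ landing in $\wKK\hat\otimes\wKK'$), and that by the table the parity/type behaves additively, $\wK_p\hat\otimes\wK_q\cong\wK_{p+q}$, so $\hat\otimes$ descends to a well-defined map $\wRBr_p(\cG)\times\wRBr_q(\cG)\to\wRBr_{p+q}(\cG)$, i.e. to a $\ZZ/8$-graded operation on $\bigoplus_{p=0}^7\wRBr_p(\cG)$. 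Well-definedness on Morita classes is immediate: if $\cA\hat\otimes\wK_\cG\cong\cA'\hat\otimes\wK_\cG$ then tensoring with $\cB$ and using associativity and commutativity of $\hat\otimes$ (up to the canonical flip, which is Real and grading-preserving) gives $(\cA\hat\otimes\cB)\hat\otimes\wK_\cG\cong(\cA'\hat\otimes\cB)\hat\otimes\wK_\cG$.

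Next I would check associativity and commutativity of $\hat\otimes$ on the nose: these hold fibrewise for graded $\cstar$-algebras, the canonical associativity and graded-flip isomorphisms are $\cG$-equivariant and intertwine the Real structures because they are natural, and one notes the graded flip $a\hat\otimes b\mapsto(-1)^{|a||b|}b\hat\otimes a$ commutes with conjugate-linear maps (the sign is real). For the identity element I would take the class of the trivial line bundle $\Gpdo\times\CC$ as in the statement, observing it is a $B$-field of type $0$ (its fibre algebra $\CC$ stabilises to $\wK_{ev}$ via $T\mapsto T\hat\otimes\id$, compatibly with the Real structure $Ad_{J_\RR}$), and that for any $\cA$ one has a canonical Real $\cG$-equivariant graded isomorphism $\cA\hat\otimes(\Gpdo\times\CC)\cong\cA$; combined with $\wK_\cG\hat\otimes\wK_\cG\cong\wK_\cG$ (type-$0$ stability — this is where the construction of $\wK_\cG$ and Kuiper-type stability enters) this shows the class of $\Gpdo\times\CC$ is a two-sided unit.

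For inverses I would use conjugate bundles: given $\cA$ of type $p$, let $\bar\cA$ be the same underlying Real space with the conjugate graded $\cstar$-algebra structure on each fibre (multiplication unchanged, scalar action conjugated, grading unchanged, $\cG$-action $\al_g$, Real structure $\tau_x$), so that $\bar\cA$ is a $B$-field whose type is $-p\bmod 8$ — here I would check against the table that conjugation sends $[0;\ve,\eta]\mapsto[0;\ve,\eta]$ and $[1;i,\ve]\mapsto[1;i,\ve]$ with the appropriate effect, equivalently $\wK_p\hat\otimes\overline{\wK_p}\cong\wK_0$. The evaluation map $\cA\hat\otimes\bar\cA\to\wK_\cG$-type algebra, $a\hat\otimes\bar b\mapsto$ (left multiplication by $ab^\ast$ in an appropriate completion), is the standard Morita-inverse pairing and is $\cG$-equivariant and Real; so $[\cA]+[\bar\cA]=0$ in $\wRBr_\ast(\cG)$.

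The main obstacle I expect is the identity axiom, specifically establishing that $\wK_\cG$ is an \emph{absorbing} object for the type-$0$ component — i.e. that $\wK_\cG\hat\otimes\wK_\cG\cong\wK_\cG$ as Real graded $\cG$-$\cstar$-algebra bundles — since this requires a Real, graded, $\cG$-equivariant version of Kuiper's theorem / stability of the compacts, controlling the local trivialisations and the Real structures simultaneously; the genuinely infinite-dimensional analytic input lives here, whereas associativity, commutativity, and the inverse pairing are essentially formal consequences of the corresponding facts for a single graded $\cstar$-algebra together with the naturality that guarantees compatibility with the $\cG$-actions and the conjugate-linear Real structures. Everything else is a routine check that the relevant canonical fibrewise isomorphisms are Real and equivariant.
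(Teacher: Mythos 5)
The paper states this proposition without proof, so there is nothing to compare line by line; judged on its own terms your outline follows the standard Dixmier--Douady/graded-Brauer template (as in Donovan--Karoubi, Tu, and Freed--Hopkins--Teleman) and most of it is sound: closure of $\hat\otimes$ with additivity of types, well-definedness on Morita classes, associativity and graded commutativity via natural fibrewise isomorphisms, and the observation that $\Gpdo\times\CC$ must be read up to stabilisation since its fibre $\CC$ is not literally elementary.

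The genuine gap is in the inverse axiom, and relatedly you have assigned the hard analytic input to the wrong place. The unit axiom is actually the formal one: $\cA\hat\otimes(\Gpdo\times\CC)\cong\cA$ on the nose, and since Morita equivalence is \emph{defined} by $\cA\hat\otimes\wK_\cG\cong\cB\hat\otimes\wK_\cG$, no absorption statement $\wK_\cG\hat\otimes\wK_\cG\cong\wK_\cG$ is needed to see that $[\Gpdo\times\CC]$ is a unit. By contrast, your treatment of inverses stops too early. The evaluation pairing does not map $\cA\hat\otimes\overline{\cA}$ onto $\wK_\cG$; it identifies $\cA\hat\otimes\overline{\cA}$ with $\cK(\cH_\cA)$, where $\cH_\cA$ is the Real graded $\cG$-equivariant Hilbert bundle of fibrewise Hilbert--Schmidt operators on $\cA$. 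To conclude $[\cA]+[\overline{\cA}]=0$ you must still show $\cK(\cH_\cA)\hat\otimes\wK_\cG\cong\wK_\cG$, i.e.\ $\cH_\cA\hat\otimes\what{\cH}_\cG\cong\what{\cH}_\cG$ as Real graded $\cG$-Hilbert bundles --- precisely the Real, graded, equivariant Kuiper/absorption theorem you flagged, but which you claimed was only needed for the identity and declared the inverse pairing ``essentially formal.'' Without that absorption step the inverse axiom is unproved. A secondary point to make explicit: the type bookkeeping for conjugation (that $\overline{\wK_p}$ has type $8-p$, e.g.\ that conjugation exchanges $[1;0,+]$ with $[1;1,+]$ and $[0;\ve,+]$ with $[0;\ve,-]$ when $\ve=0$ fails but holds for $\ve=1$, etc.) is a finite but nontrivial computation against the table and should be carried out rather than asserted, since it is exactly what guarantees the inverse lands in the complementary graded piece.
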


 Note that $\wRBr_0(\cG)$ is a subgroup of $\wRBr_\ast(\cG)$. Furthermore, the map sending a $B$-field $\cA$ into the pair consisting of its type $p$ and the $B$-field $\cA\hat{\otimes}\wK_{8-p}$ of type $0$ induces an isomorphism of abelian groups 
\begin{equation}~\label{eq:isom-BrR_vs_BrR_0}
\wRBr_\ast(\cG)\cong \check{H}R^0(\cG_\bullet,\ZZ_8)\oplus \wRBr_0(\cG).	
\end{equation}

%%%%
\section{Cohomological interpretation of $\wRBr_\ast(\cG)$}

The purpose of this section is to show the following.

\medskip 

\begin{thm}~\label{thm:main}
Let $\grpd$ be a Real groupoid. Then 
\[\wRBr_\ast(\cG)\cong \check{H}R^0(\cG_\bullet,\ZZ_8)\oplus (\check{H}R^1(\cG_\bullet,\ZZ_2)\ltimes \check{H}R^2(\cG_\bullet,\uc)).\]	
\end{thm}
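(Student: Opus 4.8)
The plan is to combine the splitting \eqref{eq:isom-BrR_vs_BrR_0} with a cohomological description of the parity-$0$ piece $\wRBr_0(\cG)$, and then identify the latter with $\wRExt(\cG,\uc)\cong\check HR^1(\cG_\bullet,\ZZ_2)\ltimes\check HR^2(\cG_\bullet,\uc)$ via Theorem~\ref{thm:extensions}. Since \eqref{eq:isom-BrR_vs_BrR_0} already gives
\[
\wRBr_\ast(\cG)\cong \check HR^0(\cG_\bullet,\ZZ_8)\oplus\wRBr_0(\cG),
\]
it suffices to produce a natural isomorphism of abelian groups $\wRBr_0(\cG)\cong\wRExt(\cG,\uc)$, compatible with graded tensor products and conjugation. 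First I would construct the map $\wRBr_0(\cG)\To\wRExt(\cG,\uc)$ sending a type-$0$ $B$-field $(\cA,\al,p)$ to the Real graded $\uc$-central extension obtained from the bundle of graded automorphisms of $\cA$: locally $\cA$ is modelled on $\wK_0=\wK_{ev}$ with its $Ad_J$-Real structure (type $[0;0,+]$, so $J^2=+1$ and $\partial(J)=0$), and the equivariant $\cstar$-algebra bundle of (degree-preserving or degree-shifting) $*$-automorphisms of each fibre is, by the graded analogue of the Skolem--Noether / Dixmier--Douady picture, classified by $\wU(\hat{\cH})/\uc=\wPU(\hat{\cH})$ together with the degree homomorphism $\partial$; globalising over $\cG$ with the $\cG$-action $\al$ and the Real structure coming from $\sigma=Ad_J$ produces a Real groupoid $\tilde{\cG}_{\cA}$, a Real $\uc$-principal bundle $\tilde{\cG}_{\cA}\To\cG'$ over a groupoid Morita equivalent to $\cG$, and a Real grading morphism $\del:\cG'\To\ZZ_2$ — that is, an element of $\wRExt(\cG,\uc)$. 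This is precisely the Real graded refinement of the classical construction relating Dixmier--Douady bundles to $\uc$-central extensions, and the equivariance and Real-structure compatibilities are exactly what was built into the definition of a Real graded Dixmier--Douady bundle.

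For the inverse direction, given a Real graded $\uc$-central extension $(\wGa,\Ga,\del)$ with $\Ga\sim_Z\cG$, I would form the associated bundle $\hat{\cH}_{\Ga}\times_{\wU(\hat{\cH})}(\text{the }\uc\text{-bundle }\wGa)$ of graded Hilbert spaces, pass to the compact operators, and transport the result along the Morita equivalence $Z$ to a type-$0$ $B$-field over $\cG$; the canonical $B$-field $\wK_\cG$ plays the role of the base-point and makes this construction respect Morita equivalence, so it descends to $\wRExt(\cG,\uc)\To\wRBr_0(\cG)$. Checking that the two constructions are mutually inverse amounts to the standard fact that $\wK_{ev}\cong\cK(\hat{\cH})$ and that conjugation of a Hilbert bundle by a $\uc$-valued cocycle does not change the associated algebra bundle up to the $\wK_\cG$-stabilisation; in the Real setting this is where one uses that the only relevant type is $\wK_0$ (the condition $J^2=+1$, $\partial J=0$ is what matches the Real structure $J_\RR$ on $\hat{\cH}$ of Example~\ref{ex:ext-PU}). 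Multiplicativity under $\hat{\otimes}$ and compatibility with inversion follow from $\wK_p\hat{\otimes}\wK_q=\wK_{p+q}$ together with the tensor-product structure on extensions recalled in the definition of $\Rext(\cG,\uc)$.

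Finally I would invoke Theorem~\ref{thm:extensions} to rewrite $\wRExt(\cG,\uc)\cong\check HR^1(\cG_\bullet,\ZZ_2)\ltimes\check HR^2(\cG_\bullet,\uc)$, and substitute into the displayed splitting of $\wRBr_\ast(\cG)$, which yields the claimed formula. The main obstacle I anticipate is not the formal bookkeeping but the verification that the local triviality and Real-structure diagrams genuinely glue: one must check that the Real structure $\sigma=Ad_J$ on the model fibre, the conjugate-linear fibre maps $\tau_x$, and the $\cG$-action $\al$ assemble into a well-defined Real groupoid structure on $\tilde{\cG}_{\cA}$ (in particular that $\partial$ is Real, i.e.\ valued in $\ZZ_2$ with trivial involution, which forces the type to be $0$ rather than one of the other even types $\wK_2,\wK_4,\wK_6$), and dually that transporting along the Morita equivalence $Z$ in the inverse construction preserves the type; this is precisely the place where the classification of Real structures on $\wK_{ev}$ from Section~\ref{B-fields} is used, and where an argument "almost the same as \cite[Corollary 2.25]{FHT}" must be upgraded to keep track of the involutions throughout.
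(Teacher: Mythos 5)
Your proposal is correct and follows essentially the same route as the paper: split off $\check{H}R^0(\cG_\bullet,\ZZ_8)$ via~\eqref{eq:isom-BrR_vs_BrR_0}, identify $\wRBr_0(\cG)$ with $\wRExt(\cG,\uc)$, and apply Theorem~\ref{thm:extensions}. The only cosmetic difference is that the paper factors the middle identification explicitly through the monoid $\Hom_{\RG}(\cG,\wPU(\hat{\cH}))_{stable}$ (sending $\cA$ to the frame bundle $P_x=\Isom^{(0)}(\wK_0,\cA_x)$ and then pulling back the universal extension $(\wU(\hat{\cH}),\wPU(\hat{\cH}),\partial)$), which is exactly the automorphism-bundle construction you describe, so the two arguments coincide in substance.
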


To do so, we just need to show $\wRBr_0(\cG)\cong \wRExt(\cG,\uc)$, thanks to Theorem~\eqref{thm:extensions} and the isomorphism~\ref{eq:isom-BrR_vs_BrR_0}. 

Note that the isomorphism mentioned in Example~\ref{exs:HR0-HR1} 2) remains true even for a non-abelian Real group $S$; in that case the set $\check{H}R^1(\cG_\bullet,S)\cong \Hom_{\RG}(\cG,S)$ is however not a group since the sum of two cocycles is not necessarily a cocycle. Nevertherless, the sets $\check{H}R^1(\cG_\bullet,\wPU(\hat{\cH}))$ and $\check{H}R^1(\cG_\bullet,\wU^0(\hat{\cH}))$ are abelian monoids under the operations of taking the graded tensor product of cocycles (and considering a fixed isomorphism of Real graded Hilbert spaces $\hat{\cH}\hat{\otimes}\hat{\cH}\cong \hat{\cH}$). The corresponding operation in $\Hom_{\RG}(\cG,\wPU(\hat{\cH}))$ and $\Hom_{\RG}(\cG,\wU^0(\hat{\cH}))$ is written additively.

\begin{lem}(compare~\cite{Tu}).
Let $\PP_0:=\coprod_{x\in \Gpdo}\wPU(\hat{\cH},\hat{\cH}_x)\To \Gpdo$ be the Real $\wPU(\hat{\cH})$-principal bundle	associated to $\wK_\cG$. Then $$\wRBr_0(\cG)\cong \Hom_{\RG}(\cG,\wPU(\hat{\cH}))_{stable},$$
where $\Hom_{\RG}(\cG,\wPU(\hat{\cH}))_{stable}:=\{P\in \Hom_{\RG}(\cG,\wPU(\hat{\cH})) \mid P+\PP_0=P\}$.
\end{lem}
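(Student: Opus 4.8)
The idea is to set up a dictionary between $B$-fields of type $0$ over $\cG$ (up to Morita equivalence) and stable Real $\wPU(\hat{\cH})$-valued cocycles, exactly parallel to the non-Real case in~\cite{Tu}, keeping track throughout of the Real structure $\sigma$ and the grading $\del$. First I would recall that a $B$-field $\cA$ of type $0$ is a bundle of graded $\cstar$-algebras all isomorphic to $\wKK = \wK_{ev}$, together with an equivariant family of isomorphisms $h_x:\cA_x\To \wKK$ intertwining $\tau_x$ with a fixed Real structure $\sigma$ (which, being of type $0$, is $Ad_{J_\RR}$ after the identification of types). Stabilizing by $\wK_\cG$ replaces $\cA_x$ by $\cA_x\hat\otimes\wK_x$, and the associated bundle of graded $\cstar$-algebra isomorphisms has structure "group" $\operatorname{Aut}(\wKK) = \wPU(\hat\cH)$ (using that a graded $\ast$-automorphism of $\cK(\hat\cH)$ is $Ad$ of a homogeneous unitary, unique up to $\uc$); the Real structure on the fibres forces the transition cocycle to be a \emph{Real} cocycle valued in $\wPU(\hat\cH)$ with its $Ad_{J_\RR}$ involution, i.e. an element of $\Hom_{\RG}(\cG,\wPU(\hat\cH))$.

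\textbf{Key steps.} (1) Given a type-$0$ $B$-field $\cA$, form $\cA\hat\otimes\wK_\cG$ and choose, locally over a Real open cover, trivializations $\cA_x\hat\otimes\wK_x\cong\cK(\hat\cH)$ compatible with the Real structure; the resulting transition functions define a class $P_\cA\in\Hom_{\RG}(\cG,\wPU(\hat\cH))$, and since we have already tensored with $\wK_\cG$ once, $P_\cA$ satisfies $P_\cA + \PP_0 = P_\cA$, i.e. $P_\cA$ is stable. (2) Check $P_\cA$ is independent of the choices up to the equivalence defining $\Hom_{\RG}$, and that Morita-equivalent $B$-fields give the same class — this uses that $\cA\hat\otimes\wK_\cG\cong\cB\hat\otimes\wK_\cG$ as Real graded $\cG$-algebras translates into cohomologous Real cocycles. (3) Conversely, from a stable $P\in\Hom_{\RG}(\cG,\wPU(\hat\cH))$ build the associated bundle $\cA_P$ with fibre $\cK(\hat\cH)$, Real structure transported from $\sigma=Ad_{J_\RR}$, and $\cG$-action $Ad$ of (a lift of) $P$; stability ensures $\cA_P$ is already $\wK_\cG$-stable, hence of type $0$. (4) Verify the two assignments are mutually inverse and additive for $\hat\otimes$ versus the monoid operation on cocycles, giving the group isomorphism. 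The grading component $\del$ plays no role here since type $0$ forces it to be trivial on the nose (we are in $\wK_{ev}$), though one should note it explicitly.

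\textbf{Main obstacle.} The delicate point is step (1)/(3): producing the local trivializations \emph{equivariantly} and showing that one may always arrange the transition functions to land in $\wPU(\hat\cH)$ as a Real cocycle — i.e. that the Real structures $\tau_x$ on the fibres and the fixed $\sigma$ patch together with the holonomy in a way compatible with the involution $\tau$ on $\cG$, rather than only locally. This requires choosing a Real open cover of $\cG_\bullet$ (closed under $\tau$), invoking local triviality of $\wK_\cG$ as a Real graded Hilbert bundle, and using that any two Real structures of the same type on $\cK(\hat\cH)$ are conjugate by a homogeneous unitary (the classification of Section~4) to normalize the identifications $h_x$ so that the diagram~(1) holds with the \emph{same} $\sigma$. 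Once this normalization is in place, the cocycle identity and the Real condition $P(\bar g)=\overline{P(g)}$ follow formally, and the rest is a routine translation; so the heart of the argument is exactly the equivariant refinement of Tu's local-triviality and automorphism-group computations.
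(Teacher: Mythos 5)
Your proposal is correct and follows essentially the same route as the paper: the paper's (very terse) proof also passes back and forth via the frame bundle $P_x:=\Isom^{(0)}(\wK_0,\cA_x)$ and the associated bundle $P\times_{\wPU(\hat{\cH})}\wK_0$, with stability absorbed by tensoring with $\wK_\cG$; your transition-cocycle formulation is just the \v{C}ech incarnation of the same generalized-morphism correspondence (cf.\ Example~\ref{exs:HR0-HR1}~2)). The only point the paper makes explicit that you leave implicit in your step (2) is the split-exact sequence of abelian monoids $0\To \Hom_{\RG}(\cG,\wU^0(\hat{\cH}))\To \Hom_{\RG}(\cG,\wPU(\hat{\cH}))\To \wRBr_0(\cG)\To 0$, which identifies exactly which cocycles become trivial in $\wRBr_0(\cG)$.
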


\begin{proof}
If $P:\cG\To \wPU(\hat{\cH})$ is a generalized Real morphism, then $\cA:=P\times_{\wPU(\hat{\cH})}\wK_0\To \Gpdo$ is a $B$-field of type $0$. Conversely, if $\cA$ is a $B$-field of type $0$, then we obtain $P$ by setting $P_x:=\Isom^{(0)}(\wK_0,\cA_x)$, the latter being the set of graded ${}^\ast$-isomorphisms. We then have a split-exact sequence of abelian monoids 
\[0\To \Hom_{\RG}(\cG,\wU^0(\hat{\cH})) \stackrel{pr}{\To} \Hom_{\RG}(\cG,\wPU(\hat{\cH})) \To \wRBr_0(\cG) \To 0\]
where the second arrow is induced by the canonical projection.	
\end{proof}

\begin{lem}
$\Hom_{\RG}(\cG,\wPU(\hat{\cH}))_{stable}\cong \wRExt(\cG,\uc)$.	
\end{lem}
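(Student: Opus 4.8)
The goal is to identify the stable part of $\Hom_{\RG}(\cG,\wPU(\hat\cH))$ with the group $\wRExt(\cG,\uc)$ of Morita-equivalence classes of Real graded $\uc$-central extensions. The bridge between the two is the Real graded $\uc$-central extension $(\wU(\hat\cH),\wPU(\hat\cH),\partial)$ of Example~\ref{ex:ext-PU}: pulling it back along a generalized Real morphism $P:\cG\To\wPU(\hat\cH)$ produces, after passing to an honest groupoid Morita-equivalent to $\cG$, a Real graded $\uc$-central extension. Concretely, given $P$ represented by a Real $\cG$-equivariant principal $\wPU(\hat\cH)$-bundle over $\Gpdo$ (equivalently, by a cocycle with values in $\wPU(\hat\cH)$ relative to a Real open cover $\cU$), I would form the pullback $\widetilde{\cG[\cU]}$ of $\wU(\hat\cH)\to\wPU(\hat\cH)$ along the transition cocycle; this is a Real $\uc$-groupoid over the Real groupoid $\cG[\cU]$ associated to the cover, and the composite $\cG[\cU]\to\wPU(\hat\cH)\stackrel{\partial}{\to}\ZZ_2$ supplies the grading. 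Since $\cG[\cU]\sim_{\text{Morita}}\cG$ (a standard fact that survives the Real-equivariant refinement, using that $\cU$ is a \emph{Real} open cover), this yields a class in $\wRExt(\cG,\uc)$, and I would check this assignment is well defined on isomorphism classes and additive for the graded tensor product (using the fixed isomorphism $\hat\cH\hat\otimes\hat\cH\cong\hat\cH$ on the $\wPU$-side and the Baer-sum description of the group law on extensions on the other).

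The second step is to build the inverse. Given a Real graded $\uc$-central extension $(\wGa,\Ga,\del)$ with $\Ga\sim_Z\cG$, the grading $\del$ together with the $\uc$-bundle structure makes $\wGa$ act on the graded Hilbert $\Ga$-module $L^2(\Ga;\hat\cH)$ in a manner compatible with the Real structure, exactly as in the construction of the canonical $B$-field $\wK_\cG$ but now twisted by $\wGa$; this produces a Real $\cG$-equivariant principal $\wPU(\hat\cH)$-bundle over $\Gpdo$, hence an element of $\Hom_{\RG}(\cG,\wPU(\hat\cH))$, and it is stable because it has been tensored with (a copy Morita-equivalent to) $\PP_0$ by construction. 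One then verifies that the two assignments are mutually inverse: composing extension $\mapsto$ bundle $\mapsto$ extension returns the original extension up to Morita equivalence because the regular representation of $\wGa$ is faithful and the $\uc$-action is recovered as the kernel of $\wU(\hat\cH)\to\wPU(\hat\cH)$; composing in the other order returns the original $P$ because a stable Real $\wPU(\hat\cH)$-bundle is determined by the extension it pulls back from $(\wU(\hat\cH),\wPU(\hat\cH),\partial)$, which is the Real graded avatar of Kuiper's theorem ($\wU^0(\hat\cH)$ with its Real structure is equivariantly contractible, so stabilisation kills the ambiguity measured by $\Hom_{\RG}(\cG,\wU^0(\hat\cH))$ appearing in the split-exact sequence of the previous lemma).

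The main obstacle I expect is \emph{not} the formal diagram-chasing but the careful bookkeeping of the Real structures throughout: one must track that $Ad_{J_\RR}$ on $\wU(\hat\cH)$, the conjugate-linear graded isomorphisms $\tau_x:\cA_x\to\cA_{\bar x}$, and the involution on the cover $\cU$ are all mutually compatible, so that pullback of the extension genuinely lands in the \emph{Real} graded $\uc$-central extensions (condition (a) of the Real-action axioms and the commuting square in the definition of a Real graded Dixmier--Douady bundle), and symmetrically that the regular representation construction respects $\tau^\ast\mu=\mu$. A secondary point requiring care is the passage between the cover-groupoid $\cG[\cU]$ and $\cG$: one needs that every Real $\wPU(\hat\cH)$-cocycle trivialises over a Real open cover and that the resulting extension's Morita class is independent of the choice of cover — this is where I would invoke, in Real-equivariant form, the colimit over Real open covers built into the definition of $\check H R^\ast$ and the analogue of~\cite[Corollary~2.25]{FHT} already cited for Theorem~\ref{thm:extensions}. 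Once these compatibilities are in place, the identification $\Hom_{\RG}(\cG,\wPU(\hat\cH))_{stable}\cong\wRExt(\cG,\uc)$ follows, and combined with the preceding lemma gives $\wRBr_0(\cG)\cong\wRExt(\cG,\uc)$, completing the chain of isomorphisms needed for Theorem~\ref{thm:main}.
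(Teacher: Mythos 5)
Your construction of the forward map --- pulling back the universal extension $(\wU(\hat{\cH}),\wPU(\hat{\cH}),\partial)$ of Example~\ref{ex:ext-PU} along the generalized Real morphism $P$, realized as a strict morphism out of a cover groupoid Morita equivalent to $\cG$ --- is exactly the map the paper describes, and your outline of the inverse (the regular-representation/associated-bundle construction together with Real-equivariant Kuiper to handle stability) supplies the direction the paper leaves unproved, in the way its cited sources (Tu, FHT) do. The proposal is correct and follows essentially the same route as the paper.
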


We only explain the map $\Hom_{\RG}(\cG,\wPU(\hat{\cH}))\To \wRExt(\cG,\uc)$. Given $P:\cG\To \wPU(\hat{\cH})$, we may view it as a morphism of Real groupoids $p:\Ga\To \wPU(\hat{\cH})$ where $\Ga$ is a Real groupoid Morita equivalent to $\cG$. Hence we obtain a Real graded $\uc$-central extension $(\wGa,\Ga,\del)$ by pulling back the extension $(\wU(\hat{\cH}),\wPU(\hat{\cH}),\partial)$ (cf. Example~\ref{ex:ext-PU}) along $p$. 

Now the proof of Theorem~\ref{thm:main} is straightforward.
\begin{rem}
It is easy to verify that if $\cG$ is a CW-complex $X$ with the trivial involution, then we recover Donovan-Karoubi's \emph{graded orthogonal Brauer group} $\operatorname{GBrO}(X)\cong H^0(X,\ZZ_8)\oplus (H^1(X,\ZZ_2)\ltimes H^2(X,\ZZ_2))$.	
\end{rem}

\begin{ex}
$\wRBr_\ast(pt)=\ZZ_8$.	
\end{ex}

%%%%%%

\section{Twisted $KR$-theory}

For a $B$-field $\cA$ over the Real groupoid $\cG$, the convolution ${}^\ast$-algebra $\cC_c(\cG;s^\ast\cA)$ is given the fibrewise grading of $\cA$, and comes equipped with the Real structure $\xi\mto \bar{\xi}$, where for $\xi\in \cC_c(\cG;s^\ast\cA)$, $\bar{\xi}(g):=\overline{\xi(\bar{g})}, \forall g\in \cG$. This induces a Real structure on the graded reduced $\cstar$-algebra $\cA\rtimes_r\cG:=\cstar_r(\cG;s^\ast\cA)$. 

\begin{df}
Twisted $KR$-theory is defined in terms of Kasparov's bivariant Real $K$-theory~\cite{Kas81} by setting
\[KR^{q-p}_\cA(\cG):= \left\{\begin{array}{ll}KKR(\CC l_{p-q,0},\cA\rtimes_r\cG), & {\rm if\ } p\ge q;\\ KKR(\CC l_{q-p,0},\cA\rtimes_r\cG), & {\rm if\ } p\le q,\end{array}\right.\]
where the complex Clifford algebras $\CC l_{k,l}$ carry out the Real structure induced from the involution $$\CC^{k+l}\ni (x_1,...,x_k,y_1,...,y_l)\mto (\bar{x}_1,...,\bar{x}_k,-\bar{y}_1,...,-\bar{y}_l)\in \CC^{k+l}.$$ 		
	\end{df}
	
We deduce immediately the following

\begin{pro}[Bott periodicity]
	$KR^{-8-q-p}_\cA(\cG)\cong KR^{q-p}_\cA(\cG)$.		
		\end{pro}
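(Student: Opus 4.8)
The statement $KR^{-8-q-p}_\cA(\cG)\cong KR^{q-p}_\cA(\cG)$ is a twisted, groupoid-equivariant incarnation of the $(1,1)$-periodicity of Kasparov's Real $KK$-theory, so the plan is to reduce it to the classical statement about $KKR$ together with the structural results already established about $B$-fields. The key point is that the exponent is only affected in a controlled way by two operations: shifting the degree by $\pm 1$ via a Clifford algebra factor, and shifting the \emph{type} of the $B$-field by graded tensoring with one of the elementary Real graded algebras $\wK_p$. First I would recall that, for any Real graded $\cstar$-algebra $B$, Kasparov's theory supplies natural isomorphisms $KKR(\CC l_{k,0},B)\cong KKR(\CC l_{k+1,1},B)\cong KKR(\CC l_{k,0},B\hat\otimes \CC l_{1,1})$ coming from $(1,1)$-periodicity, and that $\CC l_{1,1}\cong M_2(\CC)$ with the Real structure corresponding, up to Morita equivalence, to the trivial one; iterating this eight times yields $\CC l_{8,0}$ with trivial Real structure up to Morita equivalence, which is exactly the content of the $8$-fold periodicity of Kasparov's groups $KKR(\CC l_{n,0},-)$.

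The substantive translation step is to identify, for a $B$-field $\cA$ of type $p$, the crossed product $(\cA\hat\otimes\wK_{k})\rtimes_r\cG$ with $(\cA\rtimes_r\cG)\hat\otimes \wK_k$, and then to use the classification $\wK_p\hat\otimes\wK_q\cong\wK_{p+q}$ stated in $\S4$, together with the type table, to see that tensoring by the generator of the type lattice shifts $KR^{q-p}_\cA(\cG)$ by one unit in the Clifford degree. Concretely I would show that for a $B$-field $\cA$ of type $p$ one has, via the isomorphism~\eqref{eq:isom-BrR_vs_BrR_0} and stability of $\wK_\cG$, a canonical identification $\cA\rtimes_r\cG \simeq (\cA\hat\otimes\wK_{8-p})\rtimes_r\cG$ up to a Morita equivalence that is Real and grading-preserving, hence induces an isomorphism on all $KKR$ groups; combined with $\wK_8\cong\wK_0$ (trivial type) this is precisely the $8$-fold shift. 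Unwinding the definition of $KR^{q-p}_\cA(\cG)$ in terms of $KKR(\CC l_{|p-q|,0},\cA\rtimes_r\cG)$ and matching the Clifford degree bookkeeping with the type bookkeeping then gives $KR^{-8-q-p}_\cA(\cG)\cong KR^{q-p}_\cA(\cG)$.

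The main obstacle, and the only place where real work is needed, is the compatibility of the Real structures through all these identifications: one must check that the Morita equivalence $\wK_p\hat\otimes\wK_q\cong\wK_{p+q}$ can be chosen $\uc$-equivariantly and compatibly with the chosen conjugate-linear involutions (i.e. the types add \emph{as Real graded} algebras, not merely as complex graded ones), and that Kasparov's $(1,1)$-periodicity isomorphism is natural enough to survive taking the groupoid crossed product $\rtimes_r\cG$ with a $\cG$-action by graded $\ast$-automorphisms commuting with the Real structure. Both are essentially bookkeeping once the conventions of $\S4$ and $\S6$ are pinned down, but the sign conventions in the Real structure on $\CC l_{k,l}$ (the $y_j\mapsto-\bar y_j$ rule in the definition of twisted $KR$) must be tracked carefully against the $\eta=\pm$ column of the type table so that the eight elementary algebras line up with the eight Clifford degrees in the correct cyclic order. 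Everything else — naturality of $KKR$, associativity of graded tensor products, stability of $\wK_\cG$ — is quoted directly from Kasparov~\cite{Kas81} and the preceding sections.
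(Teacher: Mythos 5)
Your first paragraph already contains the entire proof, and it is the proof the paper intends when it introduces the Proposition with ``we deduce immediately'': in the definition of $KR^{q-p}_\cA(\cG)$ the twist enters only through the fixed second variable $\cA\rtimes_r\cG$ of the $KKR$-group, while the degree enters only through the Real graded Clifford algebra in the first variable. Since $\CC l_{k+8,0}\cong \CC l_{k,0}\hat{\otimes}\CC l_{8,0}$ and $\CC l_{8,0}$, with the Real structure induced by $(x,y)\mto (\bar{x},-\bar{y})$, has real form $Cl_{8,0}\cong M_{16}(\RR)$ and is therefore Real-graded Morita equivalent to $\CC$ with complex conjugation, Morita invariance of $KKR$ in the first variable gives $KKR(\CC l_{k+8,0},\cA\rtimes_r\cG)\cong KKR(\CC l_{k,0},\cA\rtimes_r\cG)$, which is the statement. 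Nothing about the type of $\cA$ is needed, because both sides of the Proposition carry the same twist.

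Your second paragraph, however, inserts a ``substantive translation step'' that is both unnecessary and, as stated, false. The claimed identification of $\cA\rtimes_r\cG$ with $(\cA\hat{\otimes}\wK_{8-p})\rtimes_r\cG$ up to a Real, grading-preserving Morita equivalence cannot hold for $p\neq 0$: the whole point of the eight types, and of the isomorphism~\eqref{eq:isom-BrR_vs_BrR_0} recording the type as a separate $\ZZ_8$-valued datum alongside the type-$0$ part, is that $\wK_{8-p}$ is \emph{not} Real-graded Morita trivial. For $\cG=pt$ and $\cA=\wK_4$, for example, the two sides compute $KO^{-\ast}(pt)$ and $KO^{-\ast-4}(pt)$, which differ ($\ZZ_2$ versus $0$ in degree $-1$). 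Tensoring the twist with $\wK_j$ produces a degree \emph{shift} by $j$, not an isomorphism in the same degree; only for $j\equiv 0\bmod 8$ does one recover the original group, and that is precisely the content of the Proposition, so using it at that point would be circular. Delete the second paragraph, keep the iterated $(1,1)$-periodicity (equivalently, the graded Real Morita triviality of $\CC l_{8,0}$), and your argument is correct and coincides with the paper's.
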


A particular case is when $\cG$ consists of two pieces $\cG_1$ and $\cG_2$ with involution going from $\cG_1$ to $\cG_2$ and \emph{vice versa}. In that case, we recover ordinary twisted complex $K$-theory: giving $\cA$ is equivalent to giving a $\cG_1$-equivariant bundle of complex elementary $\cstar$-algebras $\cA_1$ on the unit space $X_1$ of $\cG_1$, together with its conjugate bundle over the other piece. Therefore $\wRBr_\ast(\cG)\cong \wBr_\ast(\cG_1)$, where the latter is the graded complex Brauer group~\cite{DK,FHT,Tu}. What is more, we have

\begin{lem}
In the situation above, $KR^\ast_\cA(\cG)\otimes \ZZ[{1\over 2}]\cong K^\ast_{\cA_1}(\cG_1)\otimes\ZZ[{1\over 2}]$.			
		\end{lem}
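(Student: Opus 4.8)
The plan is to leverage the algebraic structure already in place: in the situation described, the Real groupoid $\cG = \cG_1 \sqcup \cG_2$ with involution swapping the two pieces, and the $B$-field $\cA$ restricts to a $\cG_1$-equivariant bundle $\cA_1$ of complex elementary $\cstar$-algebras on $X_1$, with $\cA|_{\cG_2}$ the conjugate bundle $\bar{\cA_1}$ over $X_2 = \tau(X_1)$. First I would unwind the reduced crossed product: since $\cA \rtimes_r \cG$ is the $\cstar$-completion of $\cC_c(\cG; s^\ast\cA)$ and $\cG$ decomposes as a disjoint union, there is a natural decomposition $\cC_c(\cG; s^\ast\cA) = \cC_c(\cG_1; s^\ast\cA_1) \oplus \cC_c(\cG_2; s^\ast\bar{\cA_1})$ as a vector space, but because the involution $\tau$ swaps $\cG_1$ and $\cG_2$, the fixed-point (Real) structure identifies the second summand with the complex conjugate of the first. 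Concretely, $\xi \mapsto \bar\xi$ sends $\cC_c(\cG_1;\cdot)$ isomorphically onto $\cC_c(\cG_2;\cdot)$, so that as a \emph{Real} $\cstar$-algebra $\cA\rtimes_r\cG$ is isometrically $\ast$-isomorphic to the underlying complex algebra $\cA_1\rtimes_r\cG_1 = \cstar_r(\cG_1;s^\ast\cA_1)$ equipped with the Real structure whose fixed points are $\{(a,\bar a)\}$ — in other words, $\cA\rtimes_r\cG \cong r(\cA_1\rtimes_r\cG_1)$, the ``realification'' in the sense of regarding a complex $\cstar$-algebra $B$ as a Real algebra $r(B)$ with involution the identity on the complexification $B = r(B)\otimes_\RR\CC$.

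The second step is to invoke the standard comparison between Kasparov's $KKR$ applied to such a realified algebra and ordinary complex $KK$-theory. This is a classical fact (see Kasparov~\cite{Kas81} and Schröder's book on real $KK$-theory): for a complex $\cstar$-algebra $B$ one has $KKR(\CC l_{k,0}, r(B))$ identified, after inverting $2$, with $KK(\CC, B)$ shifted appropriately, because the complex Clifford algebra $\CC l_{k,0}$ with its Real structure is, up to Morita equivalence over $\ZZ[1/2]$, just $\CC l_k$ over $\CC$ viewed in the realified picture — the subtleties distinguishing $KO$-type phenomena from $KU$-type phenomena (the $\ZZ_2$'s in degrees $1,2$ of $KO$) are exactly the $2$-torsion that disappears upon tensoring with $\ZZ[1/2]$. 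Thus $KR^{q-p}_\cA(\cG)\otimes\ZZ[1/2] = KKR(\CC l_{|p-q|,0}, r(\cA_1\rtimes_r\cG_1))\otimes\ZZ[1/2] \cong KK(\CC l_{|p-q|}, \cA_1\rtimes_r\cG_1)\otimes\ZZ[1/2] = K^{q-p}_{\cA_1}(\cG_1)\otimes\ZZ[1/2]$, matching the definition of twisted complex $K$-theory of $\cG_1$.

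I would organize this as: (i) the $\cstar$-algebra identification $\cA\rtimes_r\cG \cong r(\cA_1\rtimes_r\cG_1)$ of Real $\cstar$-algebras, a direct check on $\cC_c$ using the swap involution; (ii) the general principle $KKR(\CC l_{k,0}, r(B))\otimes\ZZ[1/2]\cong KK(\CC l_k, B)\otimes\ZZ[1/2]$ for any (graded) complex $\cstar$-algebra $B$, cited from real $KK$-theory; (iii) bookkeeping of the degree shift $q-p$ to match the definitions of $KR^\ast_\cA(\cG)$ and $K^\ast_{\cA_1}(\cG_1)$, and checking naturality so the isomorphism is canonical. The main obstacle is step (ii): one must be careful that the Real structure on $\CC l_{p-q,0}$ coming from the involution $(x_j,y_l)\mapsto(\bar x_j,-\bar y_l)$ on $\CC^{p+l}$ is precisely the one for which the realification identity holds — here only the $x$-variables appear (the algebra is $\CC l_{p-q,0}$, no $y$'s), so the Real structure is plain complex conjugation and $\CC l_{p-q,0}$ is the complexification of the real Clifford algebra $Cl_{p-q,0}$; the comparison map $KKR \to KK$ is then an instance of the complexification/realification adjunction, which is a rational isomorphism precisely because the cofiber of $2: KO \to KO$ localizes the distinction between $KO$ and $KU$ away. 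One should also confirm that passing to reduced (rather than full) crossed products is harmless here, which it is since the decomposition into $\cG_1,\cG_2$ pieces is compatible with both completions.
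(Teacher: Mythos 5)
The paper offers no proof of this lemma at all, so there is nothing to compare your argument against; the route you take (decompose the crossed product according to $\cG=\cG_1\sqcup\cG_2$, then compare $KKR$ with complex $KK$) is the natural one. Two points need repair. First, the bookkeeping in step (i) is off: writing $B=\cA_1\rtimes_r\cG_1$, the underlying complex $\cstar$-algebra of $\cA\rtimes_r\cG$ is the direct sum $B\oplus\overline{B}$ with the conjugate-linear swap as Real structure, not $B$ itself; its fixed-point algebra is $B$ regarded as a \emph{real} $\cstar$-algebra, and the identity ``$B=r(B)\otimes_\RR\CC$'' you write is false --- the complexification of $B$ viewed as a real algebra is $B\oplus\overline{B}$. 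This matters because it determines which comparison theorem is applicable in step (ii).

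Second, and more seriously, the mechanism you offer for step (ii) is not the right one, even though the statement you want is true. Inverting $2$ does \emph{not} identify $KO$ with $KU$ degree by degree: $KO^{-2}(pt)\otimes\ZZ[\frac{1}{2}]=0$ while $KU^{-2}(pt)\otimes\ZZ[\frac{1}{2}]=\ZZ[\frac{1}{2}]$; what inverting $2$ gives is $KU[\frac{1}{2}]\cong KO[\frac{1}{2}]\oplus\Sigma^{2}KO[\frac{1}{2}]$ (compare the last theorem of the paper, which is exactly this splitting). If the mechanism you describe were the operative one, the lemma would fail in half the degrees. What actually makes step (ii) work --- integrally, with no localization needed --- is that $B\oplus\overline{B}$ with the swap is of ``free involution'' type: its real form $B$ contains a central skew-adjoint unitary (the complex unit $i\in B$), so $Cl_{n,0}(\RR)\hat{\otimes}_\RR B\cong \CC l_{n,0}\hat{\otimes}_\CC B$ and hence $KKR(\CC l_{n,0},B\oplus\overline{B})\cong KKO(Cl_{n,0}(\RR),B)\cong KK(\CC l_{n,0},B)=K^{-n}_{\cA_1}(\cG_1)$, the $\cstar$-algebraic form of Atiyah's isomorphism $KR(X\times\bfS^{0,1})\cong KU(X)$. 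With that substitution your argument closes, and in fact it yields the isomorphism before tensoring with $\ZZ[\frac{1}{2}]$, so the localization in the statement appears to be unnecessary rather than essential.
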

\begin{ex}
 The Real space $\bfS^{0,1}$ is the set $\{-1,+1\}$ together with the involution $-1\mto +1$. Thus, $\wRBr(\bfS^{0,1})\cong \wBr(pt)\cong \ZZ_2$ since bundles of graded elementary complex $\cstar$-algebras on the point are trivial, and are therefore characterized by their parity. Moreover, if $\cA\in \wRBr(\bfS^{0,1})$ is realized by $i\in \ZZ_2$, we get (after tensoring with $\ZZ[{1\over 2}]$):
\[KR_\cA^{q-p}(\bfS^{0,1})=\left\{ \begin{array}{ll}\ZZ, & {\rm if\ } q-p-i = 0 \mod 2;\\ 0, & {\rm if\ } q-p-i =1\mod 2. \end{array}\right.\] 
\end{ex}

More generally, an element of $\wRBr_\ast(\cG)$ being obviously in the graded complex Brauer group $\wBr_\ast(\cG)$, the twisted complex $K$-theory groups $K^\ast_\cA(\cG)$ are well defined and are $2$-periodic. Moreover, from a slightly tricky construction of an involution on the Kasparov KK-group $KK(\CC l_{k,l},\cA\rtimes_r\cG)$, we obtain the following decomposition of twisted complex $K$-groups of $\cG$.  

\begin{thm}
Suppose $\cG$ is equipped with a non-trivial Real structure. Then for $\cA\in \wRBr_\ast(\cG)$ and $j\in \ZZ$, we have $$K^{-j}_\cA(\cG)\otimes\ZZ[{1\over 2}] \cong \left(KR^{-j}_\cA(\cG)\oplus KR^{-j-2}_\cA(\cG)\right)\otimes\ZZ[{1\over 2}].$$	
\end{thm}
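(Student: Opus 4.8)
The plan is to realize the claimed splitting as a consequence of a $\ZZ/2$-action on the complex Kasparov group $KK(\CC l_{k,l},\cA\rtimes_r\cG)$ whose homotopy fixed points recover $KR$. First I would make precise the ``slightly tricky'' involution alluded to just before the statement. The Real structure $\sigma$ on the graded $\cstar$-algebra $\cA\rtimes_r\cG$ (coming from $\xi\mto\bar\xi$) together with the standard Real structure on $\CC l_{k,l}$ induces, by functoriality of Kasparov's construction applied to the underlying complex algebras, a conjugate-linear involutive endomorphism $\Theta$ of the complex group $KK(\CC l_{k,l},\cA\rtimes_r\cG)$; concretely $\Theta$ sends a Kasparov cycle $(E,\phi,F)$ to its complex conjugate cycle $(\bar E,\bar\phi,\bar F)$ twisted by the Real structures. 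One checks $\Theta^2=\Id$ and that $\Theta$ is additive but antilinear for the (vestigial) complex structure coming from $\CC l_{k,l}\hat\otimes\CC\cong\CC l_{k,l}$, so $(KK(\CC l_{k,l},\cA\rtimes_r\cG),\Theta)$ is a $\ZZ/2$-module over $\RR$ in the appropriate sense.

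Next I would invoke the standard relationship between a complex $K$-group with a conjugate-linear involution and the associated real $K$-group: after inverting $2$, the $+1$ and $-1$ eigenspaces split the complex group, and the $+1$ eigenspace of $\Theta$ is exactly the image of the forgetful map from $KKR(\CC l_{k,l},\cA\rtimes_r\cG)=KR^{\bullet}_\cA(\cG)$. This is the Real-to-complex analogue of the elementary fact that for a complex vector space with an antilinear involution $J$, one has $V\otimes\ZZ[1/2]\cong V^{J}\otimes\ZZ[1/2]\oplus V^{-J}\otimes\ZZ[1/2]$ with $V^J$ the real form. The subtlety, and the point where the grading shift enters, is identifying the $-1$ eigenspace: multiplication by the Clifford generator, or equivalently the Bott class, intertwines $\Theta$ with $-\Theta$, so the $-1$ eigenspace for degree $-j$ is identified — again after inverting $2$ — with the $+1$ eigenspace in degree $-j-2$, i.e. with $KR^{-j-2}_\cA(\cG)\otimes\ZZ[1/2]$. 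Assembling the two eigenspaces gives the asserted isomorphism
\[
K^{-j}_\cA(\cG)\otimes\ZZ[{1\over2}]\cong \left(KR^{-j}_\cA(\cG)\oplus KR^{-j-2}_\cA(\cG)\right)\otimes\ZZ[{1\over2}].
\]

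The main obstacle I expect is twofold. First, one must verify that $\Theta$ genuinely squares to the identity on the level of KK-classes (not merely cycles) and is compatible with the Kasparov product, which requires care with the conjugate-linear identifications of graded Hilbert modules over a Real $\cstar$-algebra; this is the content of the ``tricky construction'' the author mentions and is where the hypothesis that $\cG$ carries a \emph{non-trivial} Real structure is used — it guarantees that $\Theta$ is not simply the identity and that the $-1$ eigenspace is what produces the second summand rather than collapsing. Second, the degree-$2$ shift identification of the $-1$ eigenspace must be pinned down using Bott periodicity (the Proposition above) together with the explicit behavior of $\Theta$ under the Clifford generators of $\CC l_{k,l}$; one has to check that the generator of $\CC l_{1,0}$ versus $\CC l_{0,1}$ acts with the opposite sign under $\Theta$, which is exactly the mechanism converting a degree shift by $1$ with a sign into a genuine degree shift by $2$. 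Everything else — the $\ZZ[1/2]$-linear algebra of $\ZZ/2$-representations, additivity, and the comparison with the forgetful map — is routine once these two points are settled.
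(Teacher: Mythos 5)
Your proposal is correct in outline and follows exactly the route the paper indicates: the paper offers no written proof beyond the remark that the decomposition comes from ``a slightly tricky construction of an involution'' on $KK(\CC l_{k,l},\cA\rtimes_r\cG)$, and your eigenspace splitting after inverting $2$ --- with the $+1$ eigenspace identified via the forgetful/complexification map with $KR^{-j}_\cA(\cG)$ and the $-1$ eigenspace shifted onto $KR^{-j-2}_\cA(\cG)$ by the anti-invariance of the Bott class --- is the standard way to realize that hint. (Your parenthetical rationale for the non-triviality hypothesis is not quite convincing, since the analogous splitting also holds for the trivial involution, but this does not affect the argument itself.)
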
 

We shall point out that twisted $KR$-theory of \emph{orientifold groupoids} (Example~\ref{ex:orientifold}) admits a more concrete picture. Roughly speaking, it can be expressed in terms of equivariant Fredholm operators. First we fix for every difference $-n=q-p\in \ZZ$ degree $1$ operators $\ve_1,...,\ve_q,e_1,...,e_p\in \cL(\hat{\cH})$ such that $\ve_i^2=1,\ve^\ast=\ve_i,i=1,..,q,e_j^2=-1,e_j^\ast=-e_j,j=1,...,p$ and $ee'=-e'e, \forall e \neq e' \in \{\ve_i,e_j\}$. Next define $\hat{\mathscr{F}}^{n}$ as the subset of $\cL(\hat{\cH})$ of degree $1$ operators $F$ such that $F^2-\id, F^\ast-F\in \wK_0$, and $Fe=-eF, \forall e\in \{\ve_i,e_j; i=1,...,q,j=1,...,p\}$. Similarly, form $\wU(\hat{\cH})_n$ the Real subgroup of $\wU(\hat{\cH})$ of those $u$ that commute with the $\ve_i$ and $e_j$. Then let $\wPU(\hat{\cH})_n$ be the image of $\wU(\hat{\cH})_n$ in the quotient $\wPU(\hat{\cH})$. Then 

\begin{thm}
Suppose a $B$-field $\cA$ over the orientifold groupoid $X\rtimes G$ is represented by a Real graded Dixmier-Douady bundle of type $0$. Let $P\in \Hom_{\RG}(\cG,\wPU(\hat{\cH}))_{stable}$ be its corresponding generalized morphism. Then for $n=p-q\in \ZZ$, $$KR^{-n}_\cA(X\rtimes G)\cong \left[ P^{(n)}/G,\hat{\mathscr{F}}^n\right]^{\wPU(\hat{\cH})_n}_R,$$ where $P^{(n)}\To X$ is the Real $G$-equivariant subbundle of $P$ whose fibre at $x\in X$ consists of all elements $\vp \in \Isom^{(0)}(\wK_0,\cA_x)$ such that for all $a\in \cA_x$, the operator $\vp^{-1}(a)$ commutes with the $\ve_i$ and $e_j$. Here $[\cdot,\cdot]^{\wPU(\hat{\cH})_n}_R$ means $\wPU(\hat{\cH})_n$-equivariant functions that are compatible with the Real structures.
\end{thm}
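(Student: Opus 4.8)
The plan is to construct the isomorphism by producing, for a $B$-field $\cA$ of type $0$ over the orientifold groupoid $X\rtimes G$ with corresponding generalized morphism $P$, an explicit model for the twisted $KR$-group $KR^{-n}_\cA(X\rtimes G)$ in terms of sections of an associated Fredholm bundle, and then identifying this model with the equivariant homotopy classes $[P^{(n)}/G,\hat{\mathscr{F}}^n]^{\wPU(\hat{\cH})_n}_R$. First I would unwind the definition: by the Definition in \S6, $KR^{-n}_\cA(X\rtimes G) = KKR(\CC l_{q,0},\cA\rtimes_r(X\rtimes G))$ (for $n=p-q$ with the appropriate case), and since $X\rtimes G$ is a proper groupoid (the action being free and $G$ compact, so the groupoid is actually Morita equivalent to the orbit space $X/G$), the crossed product $\cA\rtimes_r(X\rtimes G)$ is Morita equivalent to a continuous-trace-type Real graded algebra over $X/G$. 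Using the type-$0$ assumption, after stabilizing by $\wK_\cG$ one may replace $\cA$ by $P\times_{\wPU(\hat{\cH})}\wK_0$ and thereby realize the relevant Kasparov module as a pair $(\hat{\cH}_{\cG,\cA},F)$ where $F$ is a $G$-equivariant, Real, degree-$1$ fibrewise operator on the Hilbert bundle twisted by $P$, satisfying the Fredholm and Clifford-anticommutation relations that cut out $\hat{\mathscr{F}}^n$.

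Next I would carry out the standard ``Atiyah--J\"anich'' type identification in the Real equivariant setting. The key point is that $\hat{\mathscr{F}}^n$, being a space of Fredholm operators with prescribed odd symmetries and Clifford relations, is a classifying space: the space $\hat{\mathscr{F}}^n$ with its $\wPU(\hat{\cH})_n$-action and Real structure represents $KR^{-n}$ in the sense that for any (reasonable) Real space $Y$ with a $\wPU(\hat{\cH})_n$-action, $[Y,\hat{\mathscr{F}}^n]^{\wPU(\hat{\cH})_n}_R$ computes the appropriate twisted equivariant $KR$-group of $Y$. One then applies this with $Y = P^{(n)}/G$: the restriction from $P$ to the subbundle $P^{(n)}$ (those isometries $\vp$ with $\vp^{-1}(a)$ commuting with all $\ve_i,e_j$) is precisely what reduces the structure group from $\wPU(\hat{\cH})$ to $\wPU(\hat{\cH})_n$ and builds in the Clifford symmetries, so that a $\wPU(\hat{\cH})_n$-equivariant Real map $P^{(n)}/G\to\hat{\mathscr{F}}^n$ is the same datum as a section of the associated bundle of Fredholm operators over $X/G$ commuting with the $G$-action and the Clifford generators --- i.e. exactly a cycle for the Kasparov group above. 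I would then check that homotopy of such maps corresponds to operator homotopy (plus addition of degenerate modules), which gives the bijection on the level of classes; compatibility with group structure follows from the monoid structure on $\Hom_{\RG}$ discussed before Lemma in \S5.

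The main obstacle I expect is the technical verification that the Kasparov-theoretic definition of $KR^{-n}_\cA$ really does collapse to homotopy classes of Fredholm-valued maps in this proper, type-$0$ situation --- i.e. the Real, graded, $G$-equivariant analogue of the theorem that $KK(\CC l_{k,0},C_0(X)\otimes\cK)\cong [X,\Fred^{(k)}]$. This requires: (i) a stabilization/absorption argument so that every Kasparov $\CC l_{q,0}$-module over $\cA\rtimes_r(X\rtimes G)$ is homotopic to one supported on the fixed Hilbert bundle $\hat{\cH}_{\cG,\cA}$ with operator part in the class described (this uses Kasparov's Real stabilization theorem together with the fact that the algebra has a strictly positive element and the $G$-action is proper with compact isotropy); (ii) checking that the Clifford generators $\ve_i,e_j$ can be absorbed into the module so that the anticommutation relations defining $\hat{\mathscr{F}}^n$ are the honest translation of the $\CC l_{q,0}$-$\CC l_{0,p}$ grading; and (iii) verifying that all constructions are compatible with the Real structures, which is where one must be careful about the signs ($J^2=\pm1$) and the degree of $J$ --- here the type-$0$ hypothesis ($\wKK = \wK_{ev}$, $\sigma = Ad_J$ with $\partial(J)=0$, $J^2=+1$) is exactly what makes $\hat{\cH}_{\cG}$ carry a genuine Real (not quaternionic) structure of the right parity, so the identification goes through cleanly. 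The remaining bookkeeping --- naturality, well-definedness of $P^{(n)}$, and the reduction $P\rightsquigarrow P^{(n)}$ being a fibration --- is routine and parallel to the non-Real equivariant case treated in the cited works \cite{FHT,Tu}.
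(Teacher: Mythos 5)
The paper states this theorem with no proof at all --- it is the closing announcement of the note, with only the definitions of $\hat{\mathscr{F}}^n$, $\wU(\hat{\cH})_n$ and $\wPU(\hat{\cH})_n$ supplied --- so there is no argument of the author's to compare yours against. Judged on its own terms, your outline follows what is clearly the intended route (and the one taken in the complex analogues in \cite{FHT,Tu}): use freeness and properness to pass to $X/G$, use the type-$0$ hypothesis and stabilization to replace $\cA$ by $P\times_{\wPU(\hat{\cH})}\wK_0$, and then invoke an equivariant Real Atiyah--J\"anich identification of the Kasparov group with homotopy classes of $\wPU(\hat{\cH})_n$-equivariant Real maps $P^{(n)}/G\to\hat{\mathscr{F}}^n$, the passage from $P$ to $P^{(n)}$ effecting the reduction of structure group that encodes the Clifford symmetries. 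Your observation that the type-$0$ condition ($\partial(J)=0$, $J^2=+1$) is what makes the Real structure on the Hilbert bundle genuinely Real rather than quaternionic is exactly the right point to isolate.

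That said, as written this is a program rather than a proof. The entire content of the theorem is the step you defer to items (i)--(iii) of your ``main obstacle'' paragraph: the Real, graded, $G$-equivariant statement that $KKR(\CC l_{\ast,\ast},\cA\rtimes_r(X\rtimes G))$ is computed by homotopy classes of Fredholm-valued equivariant Real maps. Asserting that this ``requires'' Kasparov's stabilization theorem and a Clifford-absorption argument does not discharge it; in particular the homotopy-versus-operator-homotopy comparison (Step (i)) and the surjectivity of the map from sections of the Fredholm bundle onto $KKR$-classes need genuine arguments in the Real setting, where the usual contractibility statements for unitary groups must be checked for the fixed-point subgroups. One concrete slip: you write the cycle algebra as $\CC l_{q,0}$, but the generators $\ve_1,\dots,\ve_q$ (with $\ve_i^2=1$, self-adjoint) and $e_1,\dots,e_p$ (with $e_j^2=-1$, skew-adjoint) fixed in the paper realize the mixed-signature Real Clifford algebra $\CC l_{q,p}$ whose Real structure is the one defined in \S6; the signature, not just the difference $p-q$, governs the Real structure on $\hat{\mathscr{F}}^n$, and collapsing it to $\CC l_{q,0}$ would change the answer. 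This should be corrected before the absorption argument in your step (ii) can be made precise.
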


%%%% 

%%%%%%
%\bibliographystyle{amsplain}

\end{document}